\numberwithin{equation}{section} %% Comment out for sequentially-numbered
\numberwithin{figure}{section} %% Comment out for sequentially-numbered
\theoremstyle{plain}
\newtheorem{thm}{Theorem}[section]
  \theoremstyle{remark}
  \newtheorem*{acknowledgement*}{Acknowledgement}
  \theoremstyle{plain}
  \newtheorem{lem}[thm]{Lemma}
  \theoremstyle{definition}
  \newtheorem{defn}[thm]{Definition}
  \theoremstyle{plain}
  \newtheorem{prop}[thm]{Proposition}
  \theoremstyle{plain}
  \newtheorem{cor}[thm]{Corollary}
  \theoremstyle{remark}
  \newtheorem{rem}[thm]{Remark}
  \theoremstyle{remark}
  \newtheorem*{rem*}{Remark}
\DeclareMathOperator{\dom}{dom}
\DeclareMathOperator{\re}{Re}
\DeclareMathOperator{\supp}{supp}
\newcommand{\E}{\ensuremath{E}}
\newcommand{\D}{\ensuremath{D}}
\begin{document}

\title[Complex Powers of the Laplacian on Affine Nested Fractals]{Complex Powers of the Laplacian on Affine Nested Fractals as Calder\'on-Zygmund
operators}

%\date{\today}

\author{Marius Ionescu}
\address{Department of Mathematics, Colgate University,  Hamilton, NY, 13346, USA.}
\email{mionescu@colgate.edu}

\author{Luke Rogers}
\address{Department of Mathematics, University of Connecticut, Storrs, CT,
06269, USA.}
\email{rogers@math.uconn.edu}

\thanks{This research was partially supported by a grant from
the Simons Foundation (\#209277 to Marius Ionescu).}

\begin{abstract}
We give the first natural examples of Calder\'on-Zygmund operators in the theory
of analysis on post-critically finite self-similar fractals.  This is achieved
by showing that the purely imaginary Riesz and Bessel potentials on nested
fractals with $3$ or more boundary points are of this type.  It follows that
these operators are bounded on $L^{p}$, $1<p<\infty$ and satisfy weak $1$-$1$
bounds.  The analysis may be extended to infinite blow-ups of these fractals,
and to product spaces based on the fractal or its blow-up.
\end{abstract}

\maketitle

\section{Introduction}

Complex powers of the Laplacian on Euclidean spaces and manifolds
and their connection to pseudodifferential operators have been studied
intensely (see, for example, \cite{See_67,See_69,Ste_AMS63_70,Tay_PMS81,Dav_CTM90}
and the citations within). In this paper we define and study a class
of operators built from the Laplace operator $\Delta$ on nested
fractals~\cite{Lin_MAMS90}, which are a type of post-critically finite
self-similar fractal~\cite{Kig_CUP01,Str_Prin06}. 
The main focus is to show that the Riesz potentials $(-\Delta)^{i\alpha}$
and the Bessel potentials $(I-\Delta)^{i\alpha}$,
$\alpha\in\mathbb{R}\setminus \{0\}$,
are Calder\'on-Zygmund operators in the sense of \cite{Ste_PMS43_93}.
These operators are the first explicit examples of Calder\'on-Zygmund
operators on a general class of self-similar fractals. The main result is as follows.
\begin{thm}
Let $K$ be a nested fractal and $X$ be either $K$ or an infinite blow-up
of $K$ without boundary.  Suppose $T$ is an bounded operator on $L^{2}(\mu)$, where $\mu$ is the self-similar measure on $X$,
and there is a kernel $K(x,y)$ such that
\[ T(f)(x)=\int_{X}K(x,y)f(y)d\mu(y) \]
for $f\in L^{2}(\mu)$ and almost all $x\notin\supp f$. If $K(x,y)$ is a smooth
function off the diagonal of $X\times X$ and satisfies
\begin{align}
    \bigl| K(x,y)\bigr|
    &\lesssim R(x,y)^{-d} \label{eq:intro_est1}\\
    \bigl|\Delta_{2}K(x,y)\bigr|
    &\lesssim R(x,y)^{-2d-1},\label{eq:intro_est2}
    \end{align}
where $R(x,y)$ is the resistance metric on $X$ and $d$ is the dimension
of $X$ with respect to the resistance metric, then the operator $T$
is a Calder\'on-Zygmund operator in the sense of~\cite[Section I.6.5]{Ste_PMS43_93}.
\end{thm}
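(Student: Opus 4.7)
To verify that $T$ is a Calder\'on-Zygmund operator in the sense of~\cite[Section I.6.5]{Ste_PMS43_93} the plan is to check, beyond the given $L^{2}$ boundedness and the size estimate~\eqref{eq:intro_est1}, the H\"older regularity
\[
\bigl|K(x,y)-K(x,y')\bigr|\lesssim \frac{R(y,y')^{\gamma}}{R(x,y)^{d+\gamma}}
\quad\text{when }R(y,y')\leq \tfrac{1}{2}R(x,y),
\]
for some fixed $\gamma>0$, together with the analogous estimate in the first variable. The space $(X,R,\mu)$ is of homogeneous type because $\mu(B(x,r))\sim r^{d}$, so Stein's framework applies. The entire content is then to extract the H\"older regularity from the Laplacian bound~\eqref{eq:intro_est2}.

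My plan is to localize. Fix $x$, set $r=R(x,y)$, and take $R(y,y')\leq r/2$. On the resistance ball $B=B(y,3r/4)$ the point $x$ is absent, so $f(z):=K(x,z)$ is smooth on $B$ and, by~\eqref{eq:intro_est1} and~\eqref{eq:intro_est2}, satisfies $|f|\lesssim r^{-d}$ and $|\Delta f|\lesssim r^{-2d-1}$. The H\"older bound then reduces to the following local estimate that I would prove: \emph{if $f\in\dom(\Delta)$ on a resistance ball $B(z_{0},r)$, then there exists $\gamma>0$ (independent of scale) with}
\[
\bigl|f(u)-f(v)\bigr|\lesssim \|\Delta f\|_{\infty,B(z_{0},r)}\,r^{d+1-\gamma}\,R(u,v)^{\gamma}
\quad\text{for }u,v\in B(z_{0},r/2).
\]
Substituting $f(z)=K(x,z)$ at scale $r$ yields $|K(x,y)-K(x,y')|\lesssim r^{-d-\gamma}R(y,y')^{\gamma}$, exactly the required H\"older bound; the exponents are pinned down by scale-invariance, since $\Delta$ carries dimension $r^{-(d+1)}$ in the resistance metric. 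Regularity in the first variable follows by the symmetric argument using the analogue of~\eqref{eq:intro_est2} for $\Delta_{1}K$, which is automatic for the symmetric or self-adjoint kernels that arise in the applications.

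The local estimate is the heart of the proof. My approach would combine three standard ingredients from analysis on nested fractals: the resistance-energy inequality $|f(u)-f(v)|^{2}\leq R(u,v)\,\mathcal{E}(f,f)$ (which delivers $\gamma=\tfrac{1}{2}$ once the energy is under control); the existence of cutoff functions $\varphi$ with $\varphi\equiv 1$ on $B(z_{0},r/2)$, $\supp\varphi\subset B(z_{0},r)$, and sharp energy scaling in $r$; and a Caccioppoli-type computation using Gauss-Green to bound $\mathcal{E}(f\varphi,f\varphi)$ by an integral against $\Delta f$ together with a carr\'e du champ correction supported where $\varphi$ is nonconstant. The self-similarity of $\mathcal{E}$ and $\mu$ fixes the dependence on $r$. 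The principal obstacle is precisely the Caccioppoli step: the Laplacian on a nested fractal obeys no pointwise Leibniz rule, so the interaction between $\varphi$ and $f$ must be handled through the energy measure rather than a product rule, and the required cutoffs with sharp energy scaling are nontrivial to construct. Once this local estimate is secured, verifying the remaining Calder\'on-Zygmund conditions is routine.
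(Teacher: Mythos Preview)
Your local estimate is false as written: it bounds the oscillation of $f$ purely by $\|\Delta f\|_{\infty}$, but a nonconstant harmonic function on the ball has $\Delta f\equiv 0$ and nonzero oscillation. Any correct version must involve $\|f\|_{\infty}$ as well, say
\[
|f(u)-f(v)|\lesssim\bigl(\|f\|_{\infty,B}\,r^{-1/2}+\|\Delta f\|_{\infty,B}\,r^{d+1/2}\bigr)\,R(u,v)^{1/2},
\]
which still yields the desired H\"older bound once you substitute $|f|\lesssim r^{-d}$ and $|\Delta f|\lesssim r^{-2d-1}$. So the gap is fixable, but the estimate you wrote down as the ``heart of the proof'' is not the right one.

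Even with the corrected target, your proposed route through cutoffs and a Caccioppoli inequality is considerably harder than what the paper does, and you yourself flag the obstacle without resolving it: there is no Leibniz rule for $\Delta$, and controlling $\E(f\varphi,f\varphi)$ by $\|f\|_{\infty}$, $\|\Delta f\|_{\infty}$, and the energy of $\varphi$ via energy measures is a nontrivial piece of analysis on nested fractals that you have not carried out. The paper bypasses all of this by working not on resistance balls but on \emph{cells} of the self-similar structure. On a cell $C$ one has the exact decomposition
\[
f(z)=h^{C}(z)-\int_{C}G^{C}(z,s)\,\Delta f(s)\,d\mu(s),
\]
where $h^{C}$ is the harmonic function matching $f$ on $\partial C$ and $G^{C}$ is the Dirichlet Green function. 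The oscillation of $h^{C}$ is bounded via the resistance inequality and the trivial energy bound $\E(h^{C})\lesssim\|f\|_{\infty}^{2}$ for harmonic functions on a cell; the oscillation of the Green term is bounded using Kigami's explicit series formula for $G^{C}$ together with the self-similar scaling of energy. No cutoffs, no carr\'e du champ, no Caccioppoli. This is the payoff of using the PCF cell structure rather than metric balls: the boundary of a cell is finite and the Green function is explicit, so the decomposition into harmonic part plus potential does directly what your Caccioppoli step would have to do indirectly.
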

This is proved as Theorem \ref{thm:Singular_integral}. In Sections
\ref{sec:Imaginary-Powers} and \ref{sec:Bessel-Potentials} we show
that the Riesz and Bessel potentials have kernels which satisfy conditions
\eqref{eq:intro_est1} and \eqref{eq:intro_est2}. We conclude
that these potentials are Calder\'on-Zygmund operators and that
they are bounded on $L^{p}(X)$, $1<p<\infty$. Furthermore, we study the general
Bessel operators $(I-\Delta)^{\alpha}$, $\alpha\in\mathbb{C}$ and
prove that, for $\re\alpha<0$, they are given by integration with
respect to kernels which are smooth off the diagonal and they are
bounded on $L^{p}(X)$. In Section~\ref{sec:Products} we extend our
analysis to products of nested fractals and their infinite blowups.

Riesz and Bessel potentials for negative real powers in the context
of metric measure spaces, including fractals, have been studied
in~\cite{HuZa_09} (see also~\cite{HuZa_05}), however their results are not
directly applicable in our setting. The main tool we use is estimates of kernels
of the form~\eqref{laplace_kernel} and powers of the Laplacian applied to these
kernels. Our estimates
are closely related to those in Section~4 of~\cite{HuZa_09}, where integrals
obtained by replacing $m(t)dt$ in~\eqref{laplace_kernel} by a non-negative measure $d\nu$ are treated.
Our results are not contained in theirs, because we are primarily interested in
complex-valued oscillatory functions $m(t)$ and need estimates of Laplacians of
the kernel.  Nor are their results contained in ours, because their work can
apply to singular measures and measures for which the absolutely continuous part
is not bounded.

\begin{acknowledgement*}
The authors would like to thank Robert S. Strichartz for numerous
and fruitful discussions during our work on this project.
\end{acknowledgement*}

\section{Background}\label{backgroundnotation}

In this paper $K$ denotes a nested fractal in the sense of
Lindstr\"{o}m~\cite{Lin_MAMS90}.  These are a subclass of the post-critically
finite self-similar fractals, on which there is an analytic theory due to
Kigami~\cite{Kig_CUP01}.  We will find it convenient to use the notation and
constructions of Kigami to describe the analytic structure on $K$, rather than
the equivalent probabilistic construction made by Lindstr\"{o}m, and we only
include enough information here to provide notation for our later results.
Further details and proofs are in~\cite{Kig_CUP01}.

\subsection*{Nested fractals: Energy, Laplacian, smoothness, resistance metric}
An iterated function system (i.f.s.) is a collection $\{F_{1},\dots,F_{N}\}$
of contractions on $\mathbb{R}^{d}$. For such an i.f.s. there exists
a unique invariant set $K$ satisfying (see~\cite{Hut_81})
\[
K=F_{1}(K)\bigcup\dots\bigcup F_{N}(K).\]
$K$ is called the self-similar set associated to the i.f.s. We assume
in this paper that $\{F_{1},\dots,F_{N}\}$ are contractive similitudes
satisfying the open set condition. That is, there is a dense open subset
$O\subset K$ such that $F_i(O)\bigcap F_j(O)=\emptyset$ if $i\ne j$. For $\omega_{1},\dotsc,\omega_{n}\in\{1,\dots,N\}$,
$\omega=\omega_{1}\dotsm\omega_{n}$ is a word of length
$n$ over the alphabet $\{1,\dots,N\}$. Then $K_{\omega}=F_{\omega}(K):=F_{\omega_{1}}\circ\dots\circ F_{\omega_{n}}(K)$
is called a cell of level $n$. The set of all finite words over $\{1,\dots,N\}$
is denoted by $W_{*}$. Each map $F_{i}$ of the i.f.s. defining $K$
has a unique fixed point $x_{i}$. Then $K$ is a {\em post-critically
finite (PCF) self-similar} set if there is a subset $V_{0}\subseteq\{x_{1},\dots,x_{N}\}$
satisfying
\[
F_{\omega}(K)\bigcap F_{\omega^{\prime}}(K)\subseteq F_{\omega}(V_{0})\bigcap F_{\omega^{\prime}}(V_{0})\]
for any $\omega\ne\omega^{\prime}$ having the same length. The set
$V_{0}$ is called the \emph{boundary} of $K$ and the boundary of
a cell $K_{\omega}$ is $F_{\omega}(V_{0})$. We define $V_{1}=\bigcup_{i}F_{i}(V_{0})$,
and, inductively, $V_{n}=\bigcup_{i}F_{i}(V_{n-1})$ for $n\ge2$.

It is well known (see~\cite{Hut_81}) that for weights $\{\mu_{1},\dots,\mu_{N}\}$ such that $0<\mu_{i}<1$ there is a unique self-similar measure
\[ \mu(A)=\sum_{i=1}^{N}\mu_{i}\mu(F^{-1}(A)).\]

A nested fractal is of the above type, but in addition has a large symmetry
group.  For $K$ to be a nested fractal one requires that for every pair of
points $p,q\in V_{0}$ the reflection in the Euclidean hyperplane equidistant
from $p$ and $q$ maps $n$ cells to $n$ cells.  It is also required that any $n$
cell that intersects the hyperplane at a non-boundary point (of the $n$ cell) is
mapped to itself by the reflection.  For full details see~\cite{Lin_MAMS90}.

A key feature of nested fractals is the existence of a regular self-similar Dirichlet energy form $\E$
on $K$ with weights $0<r_{i}<1$, $i=1,\dotsc,N$ such that
\[ \E(u)=\sum_{i=1}^{N}r_{i}^{-1}\E(u\circ F_{i}).\]
The existence of such forms is non-trivial.  In the case that all $r_{i}$ are
the same it is due to Lindstr\"{o}m~\cite{Lin_MAMS90} via probabilistic methods.
Kigami's approach to constructing these as limits of resistance forms may be
found in~\cite{Kig_CUP01,Str_Prin06}.  When the $r_{i}$ are not all equal there
is no known general solution.

Let $u\in\dom\E$ and $f$ be continuous on $X$.  We say
$u\in\dom\Delta$ with (Neumann) Laplacian $\Delta u=f$ if
\[\E(u,v)=-\int_{X}fv\,d\mu\]
for all $v\in\dom\E$. We say that $u$ is
\emph{smooth} if $\Delta^n u$ is continuous for all $n\ge 1$.  The operator $-\Delta$ is non-negative definite and self-adjoint, with
eigenvalues $0=\lambda_{1}\le\lambda_{2}\le\dots$ accumulating only at $\infty$.
We fix an orthonormal basis $\{\varphi_{n}\}$ for $L^{2}(\mu)$ where
$\varphi_{n}$ has eigenvalue $\lambda_{n}$ and eigenvalues may be repeated and
let $\D$ be the set of finite linear combinations of $\varphi_{n}$.

The \emph{effective resistance metric} $R(x,y)$ on $K$ is defined by
\[ R(x,y)^{-1}=\min\{\E(u)\,:\, u(x)=0\text{ and }u(x)=1\}.\]
It is known that the resistance metric is topologically equivalent,
but not metrically equivalent to the Euclidean metric~\cite{Kig_CUP01,Str_Prin06}).

\subsection*{Examples}
An important example of a PCF self-similar set is the unit interval
$I=[0,1]$. In this case $V_{0}=\{0,1\}$. While $I$ is not a fractal,
Kigami's construction applies and one recovers the usual energy on
the interval\[
\E(u,v)=\int_{0}^{1}u^{\prime}(x)v^{\prime}(x)dx,\]
and the usual Laplacian $\Delta u=u^{\prime\prime}$.

The simplest example of a fractal to which the theory applies is the Sierpinski gasket, which has been studied
intensively (see, for example, \cite{Kig_CUP01,Str_Prin06,FuSh_92,BassStrTep_JFA99,NeStTe_04,Str_TAMS03,Tep_JFA98}).
To describe the Sierpinski gasket, consider a triangle in $\mathbb{R}^{2}$
with vertices $\{q_{0},q_{1},q_{2}\}$ and consider a set of three
mappings $F_{i}:\mathbb{R}^{2}\to\mathbb{R}^{2}$, $i=1,2,3$, defined
by \[
F_{i}(x)=\frac{1}{2}(x-q_{i})+q_{i}.\]
The invariant set of this iterated function system is the Sierpinski
gasket and $V_{0}=\{q_{0},q_{1},q_{2}\}$.

\subsection*{Blow-ups}
In \cite{Str_CJM98,Str_TAMS03} Strichartz defined fractal blow-ups of $K$.  This construction generalizes the relationship between the unit interval and the real line to arbitrary PCF self-similar sets.

Let $w\in\{1,\dots,N\}^{\infty}$
be an infinite word. Then \[
F_{w_{1}}^{-1}\dots F_{w_{m}}^{-1}K\subseteq F_{w_{1}}^{-1}\dots F_{w_{m}}^{-1}F_{w_{m+1}}^{-1}K.\]
The \emph{fractal blow-up} $K_{\infty}$ is\[
K_{\infty}=\bigcup_{m=1}^{\infty}F_{w_{1}}^{-1}\dots F_{w_{m}}^{-1}K.\]
If $C$ is an $n$ cell in $K$, then $F_{w_{1}}^{-1}\dots F_{w_{m}}^{-1}C$
is called an $(n-m)$ cell. The blow-up depends on the choice of the
infinite word $w$. In general there are an uncountably infinite number
of blow-ups which are not homeomorphic. In this paper we assume that
the infinite blow-up $K_{\infty}$ has no boundary. This happens unless
all but a finite number of letters in $w$ are the same. One can extend the
definition of the energy $\E$ and measure $\mu$ to $K_{\infty}$.
The measure $\mu$ will be $\sigma$-finite rather than finite.
As before, $\Delta$ is defined by $u\in\dom\Delta$ with $\Delta u=f$ if $u\in\dom\E$,
$f$ is continuous, and\[
\E(u,v)=-\int_{K_{\infty}}fvd\mu\]
for every $v\in\dom\E$.

It will be important in what follows that if $K$ is a nested fractal and $V_{0}$
contains $3$ or more points, then the Laplacian on an infinite blow-up without
boundary has pure point spectrum~\cite{Tep_JFA98,Sab:JFA00} and the
eigenfunctions have compact support.
We write $\{\lambda_{n}\}_{n\in\mathbb{Z}}$ for the eigenvalues of $-\Delta$, which are non-negative and accumulate only at $0$ and
$\infty$.  As for $K$, we take a basis $\{\varphi_{n}\}_{n\in\mathbb{Z}}$ of
$L^{2}(\mu)$ consisting of eigenfunctions of $-\Delta$ with eigenvalues
$\lambda_{n}$ that may be repeated, and let $\D$ be the set of finite linear
combinations of $\varphi_{n}$.

\subsection*{Notation for estimates}
We write $A(y)\lesssim B(y)$ if there is a constant
$C$ \emph{independent }of $y$, but which might depend on the fractal
$K$, such that $A(y)\le CB(y)$ for all $y$. We write $A(y)\sim B(y)$
if $A(y)\lesssim B(y)$ and $B(y)\lesssim A(y)$. If $f(x,y)$ is
a function on $X\times X$, then we write $\Delta_{1}f$ to denote
the Laplacian of $f$ with respect to the first variable and $\Delta_{2}f$
to denote the Laplacian of $f$ with respect to the second variable; repeated subscripts indicate composition, for example $\Delta_{21}=\Delta_{2}\circ\Delta_{1}$.

\subsection*{Heat kernel on nested fractals}

Let $K$ be an affine nested fractal with Dirichlet form as above.  Let $\mu$ be the unique self-similar probability measure for which $\mu_{i}=r_{i}^{d}$, so that $K$ is of Hausdorff dimension $d$ in the resistance metric, and let $X$
be an infinite blow-up of $K$ without boundary.
A fundamental result we require is an estimate for the heat kernel corresponding to the Laplacian. Specifically, the semi-group $e^{t\Delta}$ is given by integration with
respect to a positive heat kernel $h_{t}(x,y)$ which satisfies
\begin{equation}
h_{t}(x,y)\lesssim
t^{-\beta}\mbox{exp}\left(-c\left(\frac{R(x,y)^{d+1}}{t}\right)^{\gamma}\right)\text{
  for } 0<t<1,\label{eq:heat_estimates}
\end{equation}
where $\beta=d/(d+1)$, $R(x,y)$ is the effective resistance metric
on $X$, and $\gamma=\frac{\gamma'}{d+1-\gamma'}$, where $0<\gamma'<d+1$,
is the chemical exponent, a constant depending on the fractal. 
These estimates are originally due to Barlow and Perkins \cite{BaPe_PTRF88} for the case of the Sierpinski gasket
and have been generalized beyond what is needed here.  In particular,
\eqref{eq:heat_estimates} is a special case of~\cite[Theorem 1.1(2)
and Remark 3.7(2)]{FiHaKu_94}, but see also~\cite{HaKu_PLMS99}
and~\cite{HaKu_PTRD03}. Strichartz proved in \cite{Str_08Quant} that
if $X$ is an infinite blow-up of the Sierpinski gasket then the
estimate \eqref{eq:heat_estimates} holds for all $t\in(0,\infty)$. Moreover, lower estimates for the heat kernel
are proved in the papers mentioned above, but we will not use them
in this paper. We will need, however, the fact that the derivatives of
the heat kernel satisfy similar estimates to
\eqref{eq:heat_estimates}. Presumably this fact is known to
specialists but we have been unable to find a reference in the
literature, other than \cite[Proposition 7.5]{BaPe_PTRF88}. Their
estimates are related to what we need but they proved them only for the
Sierpinski gasket.
\begin{thm}\label{th:est1}
  Let $X$ be an affine nested fractal or an infinite blow-up of such a
  fractal. For $0<t<1$ we have that
  \begin{equation}
    \label{eq:heat_est_deriv}
    \left\vert t^k\left(\frac{\partial}{\partial t}\right)^k
      h_t(x,y)\right\vert \lesssim t^{-\beta}\exp\left(-c\left(\frac{R(x,y)^{d+1}}{t}\right)^{\gamma}\right).
  \end{equation}
\end{thm}
\begin{proof}
  Following the proof of Theorem 10.2 in \cite{Rog_08} we take a
  contour $\Gamma_t$ consisting of arc of the circle of radius $1/t$
  between angles $-\frac{3\pi}{4}$ and $\frac{3\pi}{4}$, together with
  rays $se^{\pm i\frac{3\pi}{4}}$, $s\in [1/t,\infty)$, and write $t^k\left(\frac{\partial}{\partial t}\right)^k
  h_t(x,y)$ using the resolvent kernel $G^{(z)}(x,y)$ to obtain
  \begin{eqnarray*}
    \label{eq:heat_rez}
    \left\vert t^k\left(\frac{\partial}{\partial t}\right)^k
      h_t(x,y)\right\vert & =& \left\vert \frac{1}{2\pi
        i}\int_{\Gamma_t} t^kz^ke^{zt}G^{(z)}(x,y)dz\right\vert \\
    &\le& \sup_{z\in \Gamma_t}\vert
    G^{(z)}(x,y)\vert\left(\int_{\Gamma_t}\vert t^kz^ke^{zt}\vert
      \vert dz\vert\right).
  \end{eqnarray*}
  But on the rays we have $\vert e^{zt}\vert\le e^{-\frac{\vert zt\vert}{\sqrt{2}}}$ so they contribute at
  most
  \[
    \int_{s\ge \frac{1}{t}}(st)^k
    e^{-\frac{st}{\sqrt{2}}}ds=\frac{2^{\frac{k+1}{2}}}{t}\int_{u\ge
      \frac{1}{\sqrt{2}}}u^ke^{-u}du= \frac{C(k)}{t},
    \]
    where
    $C(k)=2^\frac{k+1}{2}\bigl(2^{-\frac{k}{2}}e^{-\frac{1}{\sqrt{2}}}+k2^{-\frac{k-1}{2}}e^{-\frac{1}{\sqrt{2}}}+\dots
    +k!\bigr)$. On the arc $\vert z\vert=\frac{1}{t}$ we observe that
    $\int_{\vert z\vert=\frac{1}{t}}\vert e^{tz}\vert \vert dz\vert
    \lesssim \frac{1}{t}$. Therefore
    \[
    \int_{\vert z\vert=\frac{1}{t}}t^k\vert z\vert^k \vert e^{tz}\vert
  \vert dz\vert \lesssim \frac{1}{t}.
  \]
  By \cite[Theorem 9.6]{Rog_08} we have that
  \begin{eqnarray*}
  \vert G^{(z)}(x,y)\vert &\lesssim& C_1 \left(
    \frac{1}{t}+1\right)^{-\frac{1}{d+1}}\exp\left(-C_2
    \left(\frac{R(x,y)^{d+1}}{t}\right)^\gamma\right)\\
  &\simeq& t^{\frac{1}{d+1}}\exp\left(-C_2
    \left(\frac{R(x,y)^{d+1}}{t}\right)^\gamma\right)
\end{eqnarray*}
for $t$ small and $C_1,C_2>0$ constants independent on $x$ and
$y$. Thus, for $t$ small we have that
\begin{eqnarray*}
 \left\vert t^k\left(\frac{\partial}{\partial t}\right)^k
   h_t(x,y)\right\vert &\lesssim& \frac{1}{t} t^{\frac{1}{d+1}}\exp\left(-C_2
   \left(\frac{R(x,y)^{d+1}}{t}\right)^\gamma\right)\\
 &=&t^{-\frac{d}{d+1}}\exp\left(-C_2
   \left(\frac{R(x,y)^{d+1}}{t}\right)^\gamma\right).
\end{eqnarray*}
\end{proof}
We also need a bound on $h_t(x,y)$ for large $t$. It is well known
that, as $t\to \infty$, $h_t(x,y)$ converges to $0$ if $\Delta$ has
Dirichlet boundary condition and to the constant $\frac{1}{\mu(X)}$,
the square of the constant eigenfunction with $0$ eigenvalue, in the
Neumann case.
\begin{thm}\label{th:est2}
  Let $X$ be an affine nested fractal or an infinite blow-up of such a
  fractal. If $X$ is compact and $\Delta$ has Dirichlet boundary
  condition or $X$ is non-compact and $\Delta$ has Neumann boundary condition then
  \begin{equation}
    \label{eq:heat_kern_large_t_Dir}
    \vert h_t(x,y)\vert \lesssim t^{-\frac{d}{d+1}}, \text{ for }
    t\in[1,\infty).    
  \end{equation}
  If $X$ is compact and $\Delta$ has Neumann boundary condition then
  \begin{equation}
    \label{eq:hear_kern_lart_t_Neu}
    \left\vert h_t(x,y)-\frac{1}{\mu(X)}\right\vert \lesssim t^{-\frac{d}{d+1}}, \text{ for }
    t\in[1,\infty).    
  \end{equation}
  Moreover, similar estimates are true for
  $t^k\frac{\partial^k}{\partial t^k}h_t(x,y)$.
\end{thm}
\begin{proof}
  Assume that $X$ is compact. An easy argument may be made from Theorem 9.2 of \cite{Rog_08} which
  implies that if $\lambda_1$ is the smallest positive eigenvalue of
  $-\Delta$ then
  \[   
    \left\vert \sum_{2^n\le \frac{\lambda_j}{\lambda_1}\le 2^{n+1}}
      e^{-\lambda_jt}\varphi_j(x)\varphi_j(y)\right\vert\lesssim e^{-2^nt}\bigl(2^{n+1}\bigr)^\frac{d}{d+1}.
    \] Thus
    \begin{equation}
      \label{eq:sum_eigenv}
      \left\vert \sum_{j=1}^\infty
        e^{-\lambda_jt}\varphi_j(x)\varphi_j(y)\right\vert \lesssim
      \sum_{n=0}^\infty e^{-\lambda_12^nt}(\lambda_12)^{(n+1)\frac{d}{d+1}}.
    \end{equation}
    One can bound the right-hand side by a constant depending on
    $\lambda_1$ multiplied by
    $\int_0^\infty e^{-ut}u^{\frac{d}{d+1}}\frac{du}{u}$ which, in
    turn, equals $t^{-\frac{d}{d+1}}\Gamma\left(\frac{d}{d+1}\right)$,
    where $\Gamma$ is the gamma function. In the Dirichlet case, the
    left hand side of \eqref{eq:sum_eigenv} is $\vert h_t(x,y)\vert$,
    while in the compact Neumann case the left hand side of
    \eqref{eq:sum_eigenv} is $\left\vert
      h_t(x,y)-\frac{1}{\mu(X)}\right\vert$.

    To estimate $\frac{\partial^k}{\partial t^k}h_t(x,y)$ one can
    repeat the argument above and obtain that
    \[
    \left\vert \sum_{j=1}^\infty
        e^{-\lambda_jt}\lambda_j^k\varphi_j(x)\varphi_j(y)\right\vert
      \lesssim t^{-\frac{d}{d+1}-k} .
      \]
      Finally, if $X$ is an infinite blow-up of an affine nested
      fractal one can use again \cite[Theorem 9.2]{Rog_08} to obtain
      \[
      \left\vert \sum_{\lambda_j\le
          \frac{1}{t}}e^{-\lambda_jt}\varphi_j(x)\varphi_j(y)\right\vert
      \lesssim t^{-\frac{d}{d+1}}.
      \]
\end{proof}

The following lemma implies that the heat kernel is integrable with respect
to $x$ and $y$, and will be important later. The estimate is
presumably well-known and the proof is standard.
\begin{lem}
\label{lem:int_exp_X}If $y\in X$ we have that \[
\int_{X}e^{-c\bigl(\frac{R(x,y)^{d+1}}{t}\bigr)^{\gamma}}d\mu(x)\lesssim t^{\frac{d}{d+1}}\]
for all $t>0$. Similar estimates hold if we integrate with respect
to $y$.\end{lem}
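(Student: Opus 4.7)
My plan is to control the integral by Ahlfors regularity of $\mu$ in the resistance metric.  Since the weights are $\mu_i = r_i^d$ with $d$ the Hausdorff dimension of $K$, the self-similar measure on the infinite blow-up $X$ satisfies $\mu(B_R(y,r)) \sim r^d$ for all $r > 0$, where $B_R(y,r)$ denotes the resistance ball around $y$.  This is the only geometric input I would need, and combined with a dyadic annular decomposition at the natural scale $r_0 = t^{1/(d+1)}$ it yields both directions of the estimate.

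For the upper bound I would set $A_0 = B_R(y,r_0)$ and, for $k \ge 1$, $A_k = B_R(y, 2^k r_0) \setminus B_R(y, 2^{k-1} r_0)$, so that $\mu(A_k) \lesssim 2^{kd}\, t^{d/(d+1)}$ and the $A_k$ cover $X$.  On $A_0$ the integrand is at most $1$, contributing $\lesssim t^{d/(d+1)}$.  On $A_k$ with $k \ge 1$ one has $R(x,y)^{d+1}/t \ge 2^{(d+1)(k-1)}$, so the integrand is bounded by $\exp\bigl(-c\, 2^{\gamma(d+1)(k-1)}\bigr)$, and summing the contributions gives
\[
\int_X e^{-c(R(x,y)^{d+1}/t)^\gamma}\, d\mu(x)
  \lesssim t^{d/(d+1)} \Bigl( 1 + \sum_{k\ge 1} 2^{kd}\,
     \exp\bigl(-c\, 2^{\gamma(d+1)(k-1)}\bigr)\Bigr).
\]
The series converges because the double-exponential decay dominates the polynomial factor $2^{kd}$.

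For the matching lower bound I would restrict the integral to $A_0$, on which the exponent is at most $c$ and so the integrand exceeds $e^{-c}$; then $\mu(A_0) \gtrsim r_0^d = t^{d/(d+1)}$ closes the estimate.  Interchanging the roles of $x$ and $y$ gives the analogous statement when one integrates with respect to $y$, since the integrand depends only on $R(x,y)$.  The only point that requires real care is securing the volume comparison $\mu(B_R(y,r)) \sim r^d$ uniformly in $y \in X$ and $r > 0$; once that is in hand the argument is the standard annular comparison and no further technicalities arise.
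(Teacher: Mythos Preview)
Your argument is correct and is essentially the same as the paper's: both decompose $X$ into dyadic resistance annuli about $y$ at the scale $t^{1/(d+1)}$, use the Ahlfors regularity $\mu(B_R(y,r))\sim r^{d}$ to control the measure of each annulus, bound the integrand by its value at the inner radius for the upper estimate, and take the contribution of the central ball for the lower estimate. The only cosmetic difference is that the paper scales the annular radii by $2^{1/(d+1)}$ per step rather than by $2$, which changes nothing of substance.
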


\section{Singular Integral and Calder\'on-Zygmund operators on Fractals}\label{sec:CZO}

In this section we define singular integral and Calder\'on-Zygmund
operators on fractals and infinite blow-ups of fractals without
boundary. For this we do not need to assume that $K$ is nested, but only that it is a PCF fractal supporting a Laplacian in the sense of Kigami~\cite{Kig_CUP01}.  As usual, $X$ is either $K$ or an infinite blow-up
of $K$ without boundary. The following definition can be made for any dense subspace of $L^{2}$, but we consider only the subspace $\D$ of finite linear combinations of eigenfunctions.

\begin{defn}[{\cite[Section I.6.5]{Ste_PMS43_93}}]
\label{def:SIO_CZO}
An operator $T$ bounded on $L^{2}(\mu)$ is
called \emph{a Calder\'on-Zygmund operator} if $T$ is given by integration
with respect to a kernel $K(x,y)$, that is
\[
Tu(x)=\int_{X}K(x,y)u(y)d\mu(y)\]
for $u\in\D$ and almost all $x\notin\supp u$, such that $K(x,y)$
is a function off the diagonal which satisfies the following conditions
\begin{align}
    \bigl| K(x,y)\bigr|
    &\lesssim R(x,y)^{-d}\label{eq:singint1}\\
    \bigl|K(x,y)-K(x,\overline{y})\bigr|
    &\lesssim\eta\left(\frac{R(y,\overline{y})}{R(x,\overline{y})}\right)R(x,y)^{-d},\label{eq:singint2}
    \end{align}
for some Dini modulus of continuity $\eta$ and some $c>1$. We say,
in this case, that $K(x,y)$ is \emph{a standard kernel}.

The operator $T$ is a \emph{singular integral operator}
if the kernel $K(x,y)$ is singular at $x=y$.
\end{defn}

The next theorem gives conditions which guarantee that~\eqref{eq:singint2}
holds. In the succeeding  sections we will show that the purely imaginary
Riesz and Bessel potentials satisfy the hypothesis of this theorem.
The proof is more involved in our case than the proofs of similar
results in the real case due to the lack of a mean value theorem.
\begin{thm}
\label{thm:Singular_integral}
Let $K$ be a PCF fractal with regular self-similar Dirichlet form.
Suppose that $X$ is equal to $K$ or an infinite blow-up of K without boundary.  If $\Delta_{2}K(x,y)$ is continuous
off the diagonal of $X\times X$ and
\begin{align*}
    \bigl| K(x,y)\bigr|
    &\lesssim R(x,y)^{-d}\\
    \bigl|\Delta_{2}K(x,y)\bigr|
    &\lesssim R(x,y)^{-2d-1}
    \end{align*}
then
\[
    \bigl| K(x,y)-K(x,\overline{y})\bigr|
    \lesssim\left(\frac{R(y,\overline{y})}{R(x,\overline{y})}\right)R(x,y)^{-d},
    \]
for all $x,y,\overline{y}\in X$ such that $R(x,y)\ge cR(y,\overline{y})$,
for some $c>1$ depending only on the scaling values $r_{j}$ of the Dirichlet form.
\end{thm}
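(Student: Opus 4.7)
The plan is to bypass the absence of a mean value theorem by representing $u(\cdot):=K(x,\cdot)$ on a local region via a Dirichlet Green's function and then controlling pointwise differences through the resistance-metric inequality $|v(y)-v(\bar y)|^{2}\le R(y,\bar y)\,\E(v)$.

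First I would fix a neighborhood $U$ of $\bar y$, taken as a union of finitely many cells, containing $y$ in its interior and having $\mathrm{diam}_{R}(U)$ comparable to a small fixed fraction of $R(x,\bar y)$, chosen so that $x\notin U$ and $R(x,z)\sim R(x,\bar y)\sim R(x,y)$ for every $z\in U$. Such a $U$ exists by the self-similar cell structure provided the constant $c$ in the hypothesis $R(x,y)\ge c R(y,\bar y)$ is taken large enough relative to the scalings $r_{j}$; this is the source of the $c$ in the statement. On $U$ the function $u$ is smooth with $|u(z)|\lesssim R(x,\bar y)^{-d}$ and $|\Delta u(z)|\lesssim R(x,\bar y)^{-2d-1}$.

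Next I would decompose $u|_{U}=h+w$, where $h=\sum_{p\in\partial U}\psi_{p}^{U}u(p)$ is the harmonic extension of $u|_{\partial U}$ via the harmonic coordinates $\psi_{p}^{U}$ on $U$, and $w(z)=-\int_{U}g_{U}(z,z')\Delta u(z')\,d\mu(z')$ is given by the Dirichlet Green's function $g_{U}$ on $U$. Applying the resistance inequality to each $\psi_{p}^{U}$ and using the self-similar scaling $\E_{U}(\psi_{p}^{U})\sim \mathrm{diam}_{R}(U)^{-1}\sim R(x,\bar y)^{-1}$ yields $|h(y)-h(\bar y)|\lesssim (R(y,\bar y)/R(x,\bar y))^{1/2}R(x,y)^{-d}$. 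Applying the same inequality to $g_{U}(\cdot,z')$, whose energy on $U$ equals $g_{U}(z',z')\lesssim \mathrm{diam}_{R}(U)\sim R(x,\bar y)$, and combining with $|\Delta u(z')|\lesssim R(x,\bar y)^{-2d-1}$ on $U$ and $\mu(U)\sim R(x,\bar y)^{d}$, gives the matching bound
\[
    \bigl|w(y)-w(\bar y)\bigr|
    \lesssim \bigl(R(y,\bar y)R(x,\bar y)\bigr)^{1/2}\cdot R(x,\bar y)^{-d-1}
    = \bigl(R(y,\bar y)/R(x,\bar y)\bigr)^{1/2}R(x,y)^{-d}.
    \]
Summing the two contributions gives the claim.

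The hard part is the initial move: the lack of a mean value theorem forces one to quantitatively transfer information about $\Delta u$ into pointwise differences of $u$ through a local Green's function, rather than through a derivative. After that, everything reduces to standard bookkeeping with the self-similar scalings of $\E$, of $g_{U}$, and of $\mu$, together with checking that a suitable $U$ really exists at the required scale, which is precisely what pins down the dependence of $c$ on the scaling values $r_{j}$.
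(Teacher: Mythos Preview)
Your proposal is correct and follows essentially the same route as the paper: choose a cell-type neighborhood $U$ at scale comparable to $R(x,\bar y)$ containing $y,\bar y$ but excluding $x$, write $K(x,\cdot)|_{U}$ as its harmonic extension plus a Dirichlet Green potential, and bound each piece via the resistance inequality $|v(y)-v(\bar y)|^{2}\le R(y,\bar y)\,\E(v)$ together with the self-similar scalings of $\E$, $g_{U}$, and $\mu$. The one noteworthy difference is in the Green term: the paper expands $G$ via its explicit self-similar series and estimates term by term, whereas your use of the identity $\E_{U}(g_{U}(\cdot,z'))=g_{U}(z',z')\lesssim \mathrm{diam}_{R}(U)$ is more intrinsic and slightly cleaner, though it yields the same bound.
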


Suppose that
$X$ is an infinite blow-up such that \[
X=\bigcup_{n=1}^{\infty}F_{w_{1}}^{-1}\cdots F_{w_{n}}^{-1}K,\]
where $w=(w_{n})$ is an infinite word. For $n\ge 0$ we write $\omega |n$ for
the finite word $w_1\dots w_n$ and $r_{\omega |n}:=r_{\omega_1}\cdot\dots\cdot r_{\omega_n}$. We say that a cell $C$ has \emph{size}
$R>0$ if $C$ is an $m$ cell such that $c_{1}r^{-1}_{\omega|-m-1}\leq
R\leq c_{1}r^{-1}_{\omega |-m}$ if $m<0$ and $c_1r_{w|m}\le R\le
c_1r_{\omega|m-1}$ if $m>0$,
where $c_{1}$ is the constant from the estimates in \cite[page
110]{Str_Prin06} (see also \cite[Lemma 1.6.1 a)]{Str_Prin06}) that
relates $R(x,y)$ with the size of the cell containing $x$ and $y$.
Then, for a cell of size $R$ we have that $\mu(C)\lesssim R^{d}$.
\begin{lem}
\label{lem:main_lemma_SingularIntegral}Suppose that $C$ is a cell
of size $R>0$. Assume that $f$ is a smooth function on $C$ such
that\begin{align}
    \bigl| f(x)\bigr|
    &\lesssim R^{-\beta}\label{eq:main_lemma_1}\\
    \bigl|\Delta f(x)\bigr|
    &\lesssim R^{-\beta-d-1}\label{eq:main_lemma_2}
    \end{align}
for all $x\in C$, where $\beta>0$ is a constant. The constants that
we omit in the expressions above may depend on $f$. Then, for all $y$
and $\overline{y}$ in the interior of $C$ we have
\begin{equation}
    \bigl| f(y)-f(\overline{y})\bigr|
    \lesssim\left(\frac{R(y,\overline{y})}{R}\right)R^{-\beta}.\label{eq:main_lemma_concl}
    \end{equation}
\end{lem}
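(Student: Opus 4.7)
The plan is to decompose $f$ on $C$ into a harmonic piece plus a piece vanishing on $\partial C$, bound the $\E_C$ energy of each, and then conclude via the resistance--energy inequality $|u(y)-u(\overline{y})|^{2}\le R(y,\overline{y})\,\E_{C}(u)$ applied to $f$.  We may assume $R(y,\overline{y})\le R$, since otherwise the desired bound~\eqref{eq:main_lemma_concl} is immediate from~\eqref{eq:main_lemma_1}.

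Let $h$ be the harmonic function on $C$ with $h|_{\partial C}=f|_{\partial C}$, and set $g=f-h$.  The maximum principle gives $\|h\|_{\infty,C}\le\max_{\partial C}|f|\lesssim R^{-\beta}$, and hence $\|g\|_{\infty,C}\lesssim R^{-\beta}$ as well.  Writing $C=F_{\omega}(K)$ (or its blow-up analogue), the self-similar scaling $\E_{C}(u)=r_{\omega}^{-1}\E_{K}(u\circ F_{\omega})$ with $r_{\omega}\sim R$, together with the standard bound $\E_{K}(v)\lesssim\max_{V_{0}}|v|^{2}$ for harmonic $v$ on $K$ (a consequence of the finite dimensionality of the trace form on $V_{0}$), gives
\[ \E_{C}(h)\lesssim R^{-1}\max_{\partial C}|f|^{2}\lesssim R^{-1-2\beta}. \]

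For the complementary piece, $g$ vanishes on $\partial C$ and satisfies $\Delta g=\Delta f$ in the interior, so integration by parts on $C$ carries no boundary contribution and
\[ \E_{C}(g)=-\int_{C}g\,\Delta f\,d\mu\le\|g\|_{\infty,C}\,\|\Delta f\|_{L^{1}(C)}. \]
Combining $\mu(C)\lesssim R^{d}$ with~\eqref{eq:main_lemma_2} yields $\|\Delta f\|_{L^{1}(C)}\lesssim R^{-\beta-1}$, hence $\E_{C}(g)\lesssim R^{-2\beta-1}$.  Therefore $\E_{C}(f)\lesssim R^{-2\beta-1}$, and the resistance--energy inequality produces
\[ |f(y)-f(\overline{y})|^{2}\lesssim R(y,\overline{y})\,R^{-2\beta-1}=\Bigl(\frac{R(y,\overline{y})}{R}\Bigr)R^{-2\beta}, \]
which is~\eqref{eq:main_lemma_concl} after taking square roots.

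The only delicate point is the uniformity of the self-similar scaling for cells in a blow-up, since such cells can be of positive or negative level relative to $K$; this requires the standard scaling of $\E$ and of the resistance metric under the similarities $F_{\omega}$, together with the fact that for points in the interior of a cell the cell-restricted resistance is comparable to the ambient $R$.  Both facts are consequences of the resistance-form framework of~\cite{Kig_CUP01}, and once they are in hand the rest of the argument reduces to the routine energy bookkeeping sketched above.
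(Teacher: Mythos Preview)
Your argument is correct and takes a genuinely different route from the paper's.  Both proofs begin with the same harmonic--plus--potential decomposition $f=h+g$ on $C$ (with $h$ harmonic, $g|_{\partial C}=0$), but the paper then writes $g(z)=-\int_{C}G^{C}(z,s)\Delta f(s)\,d\mu(s)$ and estimates $|G^{C}(y,z)-G^{C}(\overline{y},z)|$ pointwise via the explicit series expansion $G(y',z')=\sum_{k}\sum_{|\alpha|=k}r_{\alpha}\Psi(F_{\alpha}^{-1}y',F_{\alpha}^{-1}z')$, applying the resistance--energy inequality term by term to the $\psi_{s}^{(1)}\circ F_{\alpha}^{-1}$.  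You bypass this computation entirely: the Gauss--Green identity $\E_{C}(g)=-\int_{C}g\,\Delta f\,d\mu$ (no boundary term, since $g|_{\partial C}=0$) gives $\E_{C}(g)\lesssim R^{-2\beta-1}$ in one line, and since $\E_{C}(h,g)=0$ by harmonicity, you obtain $\E_{C}(f)\lesssim R^{-2\beta-1}$ and finish with a single application of the resistance--energy inequality.  This is more conceptual and avoids the Green's function series altogether; what the paper's approach buys is a pointwise H\"older estimate on $G^{C}$, which is of independent interest but not needed here.

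One comment on the ``delicate point'' you flag: the inequality you invoke is really $|f(y)-f(\overline{y})|^{2}\le R_{C}(y,\overline{y})\,\E_{C}(f)$, with the \emph{cell} resistance, so you do need $R_{C}(y,\overline{y})\lesssim R_{X}(y,\overline{y})$ for interior points.  This is true for PCF fractals, but note that the paper's proof relies on the same fact implicitly (in passing from $R(y',\overline{y}')$ on $K_{0}$ to $r_{\omega|-m}R(y,\overline{y})$ on $X$).  If you wanted to avoid it, you could instead extend $g$ by zero and extend $h$ to a piecewise-harmonic function supported on $C$ together with the finitely many adjacent $m$-cells (constant boundary data away from $\partial C$); each such cell contributes energy $\lesssim R^{-1-2\beta}$, so $\E_{X}(\tilde h)\lesssim R^{-1-2\beta}$, and then the global resistance--energy inequality applies directly.
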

\begin{proof}
  We claim that there exists a constant $C^\prime>0$ such that for any
  $f\in \dom(\Delta)$ and for any $x,y\in K$,
  \[
  \vert f(x)-f(y)\vert\le C^\prime R(x,y)\left(\sup_{z\in
      K}\vert\Delta f(z)\vert+\max_{p,q\in\partial K}\vert f(p)-f(q)\vert\right).
  \] Using \cite[Theorem A.1]{Kig_JFA03}, our claim implies that there
  exists $C^{\prime\prime}>0$ such that for any $f\in \dom(\Delta)$,
  and $\omega\in W_*$ and any $x,y\in K_\omega$,
  \[
  \vert f(x)-f(y)\vert \le
  C^{\prime\prime}\frac{R(x,y)}{r_\omega}\left(r_\omega\mu(K_\omega)\sup_{z\in
    K_\omega}\vert\Delta f(z)\vert+\max_{p,q\in\partial K_\omega}\vert f(p)-f(q)\vert\right).
  \] The last inequality implies the conclusion of the lemma.

For the proof of the claim, let $h$ be the harmonic function on $K$
with $h|_{\partial K}=f|_{\partial K}$.
Then\[
f(x)=h(x)-\int_{K}G(x,z)\Delta f(z)d\mu(z)\;\mbox{for all}\; z\in K,\]
where $G$ is the Green function. Using \cite[Corollary 4.6]{Kig_JFA03},
we can find $C_1>0$ such that
\begin{equation}
  \label{eq:harmonic_est}
\vert h(x)-h(y)\vert \le C_1R(x,y)\left(\max_{p,q\in \partial K}\vert h(p)-h(q)\vert\right).  
\end{equation}
Moreover, \cite[Theorem 4.5]{Kig_JFA03} implies that for any $z,y,z\in X$
\begin{equation}
  \label{eq:Green_funct}
\vert G(x,z)-G(y,z)\vert\le R(x,y).  
\end{equation}
Equations \eqref{eq:harmonic_est} and \eqref{eq:Green_funct} imply now
the claim.
\end{proof}

\begin{proof}[Proof of Theorem \ref{thm:Singular_integral}]
Let $r=\max_{i=1,\dots,n}r_i$ and let $c>r^{-3-k_{0}}$, where $k_{0}$ is such that $r^{k_{0}}<1/3$,
and fix $x,y,\overline{y}\in X$ such that $R(x,y)\ge cR(y,\overline{y})$.
Then $R(x,y)\sim R(x,\overline{y})$. Let $\{C_{n}\}$ be a partition
of cells of $X$ such that each $C_{n}$ is a cell of size $r^{k_{0}+1}R(x,\overline{y})$,
or, equivalently, of size $r^{k_{0}+1}R(x,y)$. Then there is a cell
$C$ of order some $m$ in this family that contains both $y$ and
$\overline{y}$. We
claim that $x$ and $y$, and $x$ and $\overline{y}$, respectively,
do not belong to the same or adjacent $m-1$ cells. To see this, assume that $m<0$,
the proof for $m>0$ being similar. Suppose
that $x$ and $y$ belong to the same or adjacent $m-1$ cells. By
the estimates on \cite[page 110]{Str_Prin06} (see also \cite[Lemma 1.6.1]{Str_Prin06}  \cite[Theorem 2.1]{Str_08Quant})
we have that $R(x,y)\leq c_{1}r_{\omega|-m-1}^{-1}\le r^{k_{0}}R(x,y)<R(x,y)/3$,
which is a contradiction.

Let $f_{x}(z)=K(x,z)$ for all $z\in C$. By the hypotheses, $f_{x}(\cdot)$
has continuous Laplacian on $C$ and satisfies~\eqref{eq:main_lemma_1}
and~\eqref{eq:main_lemma_2} with $R=R(x,y)$ and $\beta=d$, so
Lemma \ref{lem:main_lemma_SingularIntegral} implies the conclusion.
\end{proof}

\section{Purely imaginary Riesz potentials\label{sec:Imaginary-Powers}}

Let $X$ be a nested fractal $K$ or an infinite blow-up based on this
fractal. To simplify the notation, in the remaining of the paper we will write $h_t(x,y)$ for
the heat kernel in the case that $X$ is compact and $\Delta$ has
Dirichlet boundary condition or $X$ is non-compact, and we will write
$h_t(x,y)$ for the difference between the heat kernel and $1/\mu(X)$
if $X$ is compact and $\Delta$ has Neumann boundary condition. This
allows us to use the estimates that we established in Theorems
\ref{th:est1} and \ref{th:est2}.

We define the class of operators $(-\Delta)^{i\alpha}$, with
$\alpha\in\mathbb{R}\setminus \{0\}$.  Recall that for $\lambda>0$ and
$\alpha\in\mathbb{R}$ we have
\begin{equation}
    \lambda^{i\alpha}=C_{\alpha}\lambda\int_{0}^{\infty}e^{-\lambda t}t^{-i\alpha}dt,\label{eq:imag_power_lambda}
    \end{equation}
where $C_{\alpha}=1/\Gamma(1-i\alpha)$ and $\Gamma(z)=\int_{0}^{\infty}t^{z-1}e^{-t}dt$ if $\re z>0$ is the Gamma function.
\begin{defn}
  Let $\alpha\in\mathbb{R}$, $\alpha\ne 0$. Recall that $\D$ is the set of finite linear combinations of eigenfunctions. For $\varphi$ an eigenfunction with eigenvalue $\lambda$ we define
\[
(-\Delta)^{i\alpha} \varphi= \lambda^{i \alpha} \varphi
\] and thus, for $u\in\D$, \[
(-\Delta)^{i\alpha}u=C_{\alpha}(-\Delta)\left(\int_{0}^{\infty}e^{t\Delta}ut^{-i\alpha}dt\right),\]
where $C_{\alpha}$ is the constant from \eqref{eq:imag_power_lambda}.
\end{defn}
We show that these operators are Calder\'on-Zygmund
operators by proving that their kernels satisfy estimates of the form
\eqref{eq:singint1} and \eqref{eq:singint2}.

Before doing this we need the following lemma which says that in order to show the kernel coincides with a smooth function off the diagonal in the sense of Theorem \ref{thm:Singular_integral}, it suffices to differentiate inside the integral.
\begin{lem}\label{lem: diff inside}
If $m\in L^{ \infty}([0,\infty))$ and $u$ is a smooth function with compact support on $X$ not intersecting $\{x\}$ then
\begin{equation*}
    \int_{X} \Delta^{k}u(y) \int_{0}^{\infty} m(t) h_{t}(x,y) \, dt \, d\mu(y)
    = \int_{X} u(y) \int_{0}^{\infty}  m(t) \Delta_{2}^{k} h_{t}(x,y)\, dt.
    \end{equation*}
\end{lem}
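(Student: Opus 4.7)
The plan is a Fubini-plus-Green's-formula argument. First I would establish the identity pointwise in $t$, then multiply by $m(t)$ and integrate in $t\in(0,\infty)$, and finally apply Fubini on each side to interchange the order of integration. The case $k=0$ is tautological, so throughout I would assume $k\geq 1$.

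For the fiberwise identity, fix $t>0$. Iterated application of Green's formula, equivalently the self-adjointness of $\Delta$ on the domain of finite-energy compactly supported functions, moves $\Delta^{k}$ from $u$ onto $h_{t}(x,\cdot)$, yielding
\[
\int_{X} \Delta^{k}u(y)\, h_{t}(x,y)\, d\mu(y) = \int_{X} u(y)\, \Delta_{2}^{k} h_{t}(x,y)\, d\mu(y).
\]
Boundary terms drop out: $u$ has compact support and the Dirichlet form is strongly local, so $u$ and all its iterated Laplacians vanish off $\supp u$ and in particular on the boundary of any sufficiently large cell. A transparent spectral derivation also works: writing $u = \sum_{n} a_{n}\varphi_{n}$ and $h_{t}(x,y) = \sum_{n} e^{-\lambda_{n} t}\varphi_{n}(x)\varphi_{n}(y)$, both sides collapse to $\Delta^{k}(e^{t\Delta}u)(x) = \sum_{n} (-\lambda_{n})^{k} a_{n} e^{-\lambda_{n} t}\varphi_{n}(x)$.

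Multiplying by $m(t)$ and integrating in $t$, the lemma reduces to a Fubini interchange on each side. Let $K'=\supp u$ and set $\delta := \inf_{y\in K'} R(x,y)$, which is positive since $x\notin K'$. The heat equation $\partial_{t}h_{t} = \Delta_{2} h_{t}$, combined with the time-derivative estimate stated after~\eqref{eq:heat_estimates}, gives
\[
\bigl|\Delta_{2}^{k} h_{t}(x,y)\bigr| \lesssim t^{-k - d/(d+1)} \exp\bigl(-c(\delta^{d+1}/t)^{\gamma}\bigr) \quad \text{for all } y\in K'.
\]
For $k\geq 1$ this is absolutely integrable in $t$ on $(0,\infty)$: the exponential kills the $t\to 0$ singularity and the power exponent $-k - d/(d+1) < -1$ handles $t\to\infty$. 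Since $m\in L^{\infty}$ and $u$ is bounded on the compact set $K'$, Fubini validates the interchange on the right-hand side, and the fiberwise equality from the previous step then forces the corresponding interchange on the left to produce the same value.

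The main obstacle is precisely this time-integrability at $t=\infty$: the raw heat kernel bound $h_{t}\lesssim t^{-d/(d+1)}$ is not $L^{1}$ in $t$ on a noncompact blow-up, so one cannot directly Fubini the left-hand side as originally written. The decisive observation is that each Laplacian applied to $h_{t}$ supplies, via the heat equation, an extra factor of $t^{-1}$; from $k=1$ onward the resulting integrand becomes absolutely integrable in $(y,t)$, and this is what makes the identity rigorous.
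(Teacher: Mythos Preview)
Your approach is essentially the same as the paper's: both argue via the Green--Gauss formula (self-adjointness of $\Delta$) to move $\Delta^{k}$ from $u$ onto $h_{t}(x,\cdot)$, together with Fubini to interchange the $t$- and $y$-integrations. Your write-up is in fact more careful than the paper's three-line proof, since you isolate the integrability obstruction at $t\to\infty$ and explain why the extra $t^{-k}$ decay coming from $\Delta_{2}^{k}h_{t}=\partial_{t}^{k}h_{t}$ makes the right-hand side absolutely integrable; the paper simply applies Green--Gauss and Fubini without comment.
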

\begin{proof}

Using the Green-Gauss formula (see, for example, \cite[Theorem 2.4.1]{Str_Prin06}) we have that
\begin{align*}
\int_{X} \Delta^{k}u(y)\int_{0}^{\infty} m(t) h_{t}(x,y)  \, dt\, d\mu(y) &= \int_{X} \int_{0}^{\infty} \Delta_2^{k}u(y) m(t) h_{t}(x,y)  \, dt\, d\mu(y)\\
&= \int_{X}\int_{0}^{\infty} u(y) m(t) \Delta_2^{k} h_{t} (x,y)\, dt\, d\mu \\
&= \int_{X} u(y)\int_{0}^{\infty} m(t)\Delta_2^{k} h_t(x,y)\, dt\,d\mu. \qedhere
\end{align*}

\end{proof}

\begin{prop}
\label{pro:Kernel_imaginary}For $\alpha\in\mathbb{R}$, $\alpha\ne 0$, define \[
K_{i\alpha}(x,y)=C_{\alpha}\int_{0}^{\infty}(-\Delta_{1})h_{t}(x,y)t^{-i\alpha}dt.\]
Then $K_{i\alpha}$ is the kernel of $(-\Delta)^{i\alpha}$, in the
sense that\begin{equation}
(-\Delta)^{i\alpha}u(x)=\int_{X}K_{i\alpha}(x,y)u(y)d\mu(y)\label{eq:integral_ker}\end{equation}
for all $u\in\D$ such that $x\notin\supp u$. Moreover, the kernel
$K_{i\alpha}(x,y)$ is smooth off the diagonal and satisfies the following
estimates
\begin{align}
    \vert K_{i\alpha}(x,y)\vert
    &\lesssim R(x,y)^{-d}\label{eq:estimate1-1}\\
    \vert\Delta_{2}K_{i\alpha}(x,y)\vert
    &\lesssim R(x,y)^{-2d-1}.\label{eq:estimate2-1}
\end{align}
\end{prop}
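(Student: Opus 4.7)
The plan is to verify the integral representation first, then derive the two pointwise estimates (and smoothness off the diagonal) from the identity $-\Delta_{1}h_{t}=-\partial_{t}h_{t}=-\Delta_{2}h_{t}$ together with the sub-Gaussian bounds on $t^{k}\partial_{t}^{k}h_{t}$ recorded in~\eqref{eq:heat_estimates}.

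For the representation, I would start from the definition $(-\Delta)^{i\alpha}u=C_{\alpha}(-\Delta)\int_{0}^{\infty}e^{t\Delta}u\,t^{-i\alpha}\,dt$ for $u\in\D$. Because $u$ is a finite linear combination of eigenfunctions with positive eigenvalues (the zero mode being annihilated by the outer $-\Delta$), the inner integral converges in $\D$ and commutes with $-\Delta$. Writing $e^{t\Delta}u(x)=\int_{X}h_{t}(x,y)u(y)\,d\mu(y)$ and using $-\Delta e^{t\Delta}u=-\partial_{t}e^{t\Delta}u$ together with Lemma~\ref{lem: diff inside} to slide $-\Delta_{1}$ inside the heat kernel integral yields
\[
(-\Delta)^{i\alpha}u(x)=C_{\alpha}\int_{0}^{\infty}\int_{X}(-\Delta_{1})h_{t}(x,y)\,u(y)\,d\mu(y)\,t^{-i\alpha}\,dt.
\]
When $x\notin\supp u$, the distance $R(x,y)$ is bounded below on $\supp u$ and $|\partial_{t}h_{t}(x,y)|\lesssim t^{-1-\beta}\exp(-c(R(x,y)^{d+1}/t)^{\gamma})$ supplies an integrable majorant in $(y,t)$, so Fubini produces~\eqref{eq:integral_ker}.

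For the size and smoothness estimates, rewrite $K_{i\alpha}(x,y)=-C_{\alpha}\int_{0}^{\infty}\partial_{t}h_{t}(x,y)\,t^{-i\alpha}\,dt$ using $\Delta_{1}h_{t}=\partial_{t}h_{t}$, and iterate via $\Delta_{2}h_{t}=\partial_{t}h_{t}$, with the differentiation under the integral justified by Lemma~\ref{lem: diff inside} and dominated convergence, to obtain
\[
\Delta_{2}^{k}K_{i\alpha}(x,y)=-C_{\alpha}\int_{0}^{\infty}\partial_{t}^{k+1}h_{t}(x,y)\,t^{-i\alpha}\,dt.
\]
Using $|\partial_{t}^{k+1}h_{t}(x,y)|\lesssim t^{-k-1-\beta}\exp(-c(R(x,y)^{d+1}/t)^{\gamma})$ and substituting $s=R(x,y)^{d+1}/t$ reduces the $t$-integral to a finite constant times $R(x,y)^{-(d+1)(k+\beta)}$. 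Since $\beta=d/(d+1)$, this equals $R(x,y)^{-d-(d+1)k}$; the cases $k=0$ and $k=1$ deliver~\eqref{eq:estimate1-1} and~\eqref{eq:estimate2-1} respectively, and dominated convergence with the same majorant shows each $\Delta_{2}^{k}K_{i\alpha}$ is continuous off the diagonal. Smoothness in the first variable follows from the symmetry $h_{t}(x,y)=h_{t}(y,x)$.

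I expect the main obstacle to be controlling the oscillatory factor $t^{-i\alpha}$, whose modulus is $1$ so that no decay comes from that term. All integrability must come from the heat kernel: the exponential factor handles $t\to 0^{+}$ while the polynomial prefactor $t^{-k-1-\beta}$ handles $t\to\infty$. Keeping careful track of this when invoking Lemma~\ref{lem: diff inside} and interchanging orders of integration is the delicate bookkeeping at the heart of the argument.
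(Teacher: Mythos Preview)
Your proposal is correct and follows essentially the same route as the paper: convert Laplacians of $h_{t}$ into time derivatives, apply the sub-Gaussian bounds~\eqref{eq:heat_estimates} for $t^{k}\partial_{t}^{k}h_{t}$, and rescale $t\mapsto R(x,y)^{d+1}/t$ to extract the power of $R(x,y)$. The only cosmetic difference is that the paper treats $(-\Delta_{1})^{j+1}(-\Delta_{2})^{k}K_{i\alpha}$ in one stroke, whereas you do $\Delta_{2}^{k}$ and then invoke symmetry of $h_{t}$ for the first variable; also, your first citation of Lemma~\ref{lem: diff inside} (for sliding $-\Delta_{1}$ inside the $y$-integral) is not quite what that lemma states, but the step is immediate for $u\in\D$ since everything reduces to finite sums of eigenfunctions.
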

\begin{proof}
The proof of~\eqref{eq:integral_ker} is clear because $h_{t}(x,y)$
is the kernel of the heat operator and $u$ is a linear combination of eigenfunctions. Both the smoothness and the desired estimates
rely on the following computation using the estimate~\eqref{eq:heat_estimates} and with $l=j+k$
\begin{align*}
    \biggl| \int_{0}^{\infty}(-\Delta_{2})^{k}(-\Delta_{1})^{j+1}h_{t}(x,y)t^{-i\alpha}dt\biggr|
    &= \biggl| \int_{0}^{\infty}\frac{\partial^{l+1}}{\partial t^{l+1}}h_{t}(x,y)t^{-i\alpha}dt \biggr|\\
    & \lesssim \int_{0}^{\infty}t^{-\frac{d}{d+1}-l}e^{-c\left(\frac{R(x,y)^{d+1}}{t}\right)^{\gamma}}\frac{dt}{t}\\
    &= R(x,y)^{-d-l(d+1)}\int_{0}^{\infty}u^{\frac{d}{d+1}+l}e^{-cu^{\gamma}}\frac{du}{u}\\
    &\lesssim C(j+k) R(x,y)^{-d-(j+k)(d+1)},
    \end{align*}
where $C(m)$ denotes a constant depending only on $m$.  Since the functions in the integrand are continuous on $X\times X$ and the integral converges uniformly on compact sets away from $R(x,y)=0$ we conclude that $(-\Delta_{2})^{k} (-\Delta_{1})^{j}K_{i\alpha}(x,y)$ is continuous off the diagonal for each $j,k\geq0$.
\end{proof}
\begin{cor}
The operators $(-\Delta)^{i\alpha}$, $\alpha\in\mathbb{R}\setminus \{0\}$, are Calder\'on-Zygmund operators.\end{cor}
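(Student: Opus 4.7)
The plan is to verify the three conditions of Definition \ref{def:SIO_CZO}---boundedness on $L^{2}(\mu)$, representation by an integral against a kernel, and the standard kernel conditions---each of which reduces immediately to a result already established.

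For $L^{2}$ boundedness I would argue directly from the spectral definition. Writing $u=\sum c_{n}\varphi_{n}\in\D$, Parseval's identity gives
\[
    \|(-\Delta)^{i\alpha}u\|_{2}^{2}=\sum |\lambda_{n}^{i\alpha}|^{2}|c_{n}|^{2}\leq\sum|c_{n}|^{2}=\|u\|_{2}^{2},
\]
since $|\lambda_{n}^{i\alpha}|=1$ for $\lambda_{n}>0$; the prefactor $\lambda$ in \eqref{eq:imag_power_lambda}, together with the fact that $e^{t\Delta}$ acts as the identity on $\ker\Delta$, forces $(-\Delta)^{i\alpha}$ to annihilate the zero eigenspace, so the contraction bound holds for every $u\in\D$ and extends by density to $L^{2}(\mu)$.

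The integral representation
\[
(-\Delta)^{i\alpha}u(x)=\int_{X}K_{i\alpha}(x,y)u(y)\,d\mu(y)
\]
for $u\in\D$ with $x\notin\supp u$ is exactly the content of Proposition \ref{pro:Kernel_imaginary}, which moreover produces both the size bound \eqref{eq:estimate1-1} (giving \eqref{eq:singint1}) and the auxiliary Laplacian bound \eqref{eq:estimate2-1}. These two estimates are precisely the hypotheses of Theorem \ref{thm:Singular_integral}, so that theorem immediately yields
\[
    |K_{i\alpha}(x,y)-K_{i\alpha}(x,\overline{y})|\lesssim \left(\frac{R(y,\overline{y})}{R(x,\overline{y})}\right)^{1/2}R(x,y)^{-d}
\]
whenever $R(x,y)\geq cR(y,\overline{y})$. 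This is \eqref{eq:singint2} with $\eta(t)=t^{1/2}$, which is a Dini modulus of continuity because $\int_{0}^{1}t^{-1/2}\,dt=2<\infty$.

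There is no substantive obstacle in this corollary; it is a bookkeeping assembly of Proposition \ref{pro:Kernel_imaginary}, which rests on the heat kernel estimates, and Theorem \ref{thm:Singular_integral}, whose mean-value-free regularity argument (Lemma \ref{lem:main_lemma_SingularIntegral}) provides the H\"older continuity replacement that is the heart of the whole approach. The only thing worth flagging explicitly is the harmless treatment of the $\lambda=0$ eigenspace described above, so that the spectral argument is genuinely a contraction rather than merely a formal computation.
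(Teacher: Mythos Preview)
Your proof is correct and follows essentially the same route as the paper: $L^{2}$ boundedness via the spectral theorem, kernel representation and the two estimates from Proposition~\ref{pro:Kernel_imaginary}, then Theorem~\ref{thm:Singular_integral} for the H\"older regularity. You supply a little more detail than the paper does (the explicit Parseval computation, the treatment of the zero eigenspace, and the verification that $\eta(t)=t^{1/2}$ is Dini), but the argument is the same.
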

\begin{proof}
Observe that $(-\Delta)^{i\alpha}$ extends from $\D$ to $L^{2}(\mu)$
by the spectral theorem. By Proposition \ref{pro:Kernel_imaginary},
$(-\Delta)^{i\alpha}$ is given by integration against a kernel
$K_{i\alpha}$ that is smooth off the diagonal and satisfies estimates
\eqref{eq:estimate1-1} and \eqref{eq:estimate2-1}. Theorem \ref{thm:Singular_integral}
implies that $(-\Delta)^{i\alpha}$ is a Calder\'on-Zygmund operator.
\end{proof}
We believe that the Riesz potentials are singular integral operators,
that is, the kernel $K_{i\alpha}(x,y)$ is singular on the diagonal
for all $\alpha\in\mathbb{R}\setminus\{0\}$,  but have not succeeded in proving this.
\begin{thm}
\label{thm:Lp_bounded_imag}For  $\alpha\in\mathbb{R}\setminus \{0\}$, the operator
$(-\Delta)^{i\alpha}$ defined originally on $\D$ extends
to a bounded operator on $L^{p}(\mu)$ for $1<p<\infty$, and satisfies
weak $1$-$1$ estimates.\end{thm}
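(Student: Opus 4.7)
The strategy is the standard one from Calder\'on--Zygmund theory on spaces of homogeneous type. First, since $\lambda \mapsto \lambda^{i\alpha}$ is bounded by $1$ on $[0,\infty)$, the spectral theorem immediately gives that $(-\Delta)^{i\alpha}$, defined originally on $\D$, extends to an isometry on $L^{2}(\mu)$. Combined with the preceding corollary, $(-\Delta)^{i\alpha}$ is a Calder\'on--Zygmund operator in the sense of Definition \ref{def:SIO_CZO}, with $\eta(r)=r^{1/2}$ (a H\"older, hence Dini, modulus).

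Next I would verify that $(X,R,\mu)$ is a space of homogeneous type in the sense of \cite{Ste_PMS43_93}. The Ahlfors regularity $\mu(B_{R}(x,r))\sim r^{d}$, which is implicit in our choice of the self-similar measure with $\mu_{i}=r_{i}^{d}$ and which was used in Lemma~\ref{lem:int_exp_X}, immediately yields the doubling condition. Once this is in place, the general Calder\'on--Zygmund theorem on spaces of homogeneous type (see \cite[Section I.6.5, Theorem 3]{Ste_PMS43_93}) applies: the $L^{2}$ boundedness together with the standard kernel estimates \eqref{eq:estimate1-1} and \eqref{eq:singint2} imply that $(-\Delta)^{i\alpha}$ satisfies a weak-type $(1,1)$ inequality and is bounded on $L^{p}(\mu)$ for $1<p\le 2$.

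To obtain $L^{p}$ boundedness in the range $2<p<\infty$ I would appeal to duality. Formally on $\D$ the operator $(-\Delta)^{i\alpha}$ is symmetric and its adjoint on $L^{2}(\mu)$ is $(-\Delta)^{-i\alpha}$, which is an operator of exactly the same form (with $\alpha$ replaced by $-\alpha$). Hence it too is a Calder\'on--Zygmund operator by the same argument, bounded on $L^{q}(\mu)$ for $1<q\le 2$; by duality this gives the boundedness of $(-\Delta)^{i\alpha}$ on $L^{p}(\mu)$ for the conjugate exponents $2\le p<\infty$.

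The main point to be careful about is not a deep obstacle but rather a verification: one must check that the form of the kernel estimates that came out of Theorem~\ref{thm:Singular_integral} fits the hypothesis of the Calder\'on--Zygmund theorem in \cite{Ste_PMS43_93} on a doubling metric measure space, rather than on $\mathbb{R}^{n}$. This reduces to recording that $R$ is a metric, $\mu$ is Borel and doubling in $R$, and that $\eta(r)=r^{1/2}$ satisfies the Dini condition $\int_{0}^{1}\eta(r)\frac{dr}{r}<\infty$; all of these are immediate in our setting.
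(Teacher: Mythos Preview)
Your proposal is correct and follows essentially the same route as the paper: invoke the Calder\'on--Zygmund machinery of \cite[Section~I.6.5, Theorem~3]{Ste_PMS43_93} on a space of homogeneous type to get weak $(1,1)$ and $L^{p}$ boundedness for $1<p\le 2$, then use duality for $2<p<\infty$. The paper's proof is terser---it simply cites the relevant theorems in \cite{Ste_PMS43_93} and \cite{Ste_PMS30_70}---while you spell out the $L^{2}$ boundedness via the spectral theorem, the doubling property, and the form of the adjoint, but there is no substantive difference in approach.
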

\begin{proof}
Theorem 3 of \cite[page 19]{Ste_PMS43_93} implies that $(-\Delta)^{i\alpha}$
extends to a bounded operator on $L^{p}(\mu)$ for all $1<p\le2$ and
satisfies weak $1$-$1$ estimates. A duality argument (see the proof
of Theorem 1 from \cite[page 29]{Ste_PMS30_70}) implies that $(-\Delta)^{i\alpha}$
extends to a bounded operator on $L^{p}(\mu)$ for all $2<p<\infty$.
Thus $(-\Delta)^{i\alpha}$ extends to a bounded operator on $L^{p}(\mu)$,
for all $1<p<\infty$, and satisfies weak $1$-$1$ estimates.\end{proof}
\begin{rem}
The boundedness of $(-\Delta)^{i\alpha}$ on $L^{p}(\mu)$ for $1<p<\infty$
can also be obtained using the general spectral multiplier theorem
of \cite{DuOuSi_JFA02} (see \cite[Proposition 3.2]{Str_JFA03}).
\end{rem}

\begin{rem}[Laplace type transforms]
 The only property of the function $t\mapsto t^{-i\alpha}$
used in the proof of Proposition \ref{pro:Kernel_imaginary} was its
uniform boundedness. Therefore all the above results remain valid
for a more general class of operators, namely the operators of Laplace transform type.
Recall that a function $p:[0,\infty)\to\mathbb{R}$
is said to be of \emph{Laplace transform type} if\[
p(\lambda)=\lambda\int_{0}^{\infty}m(t)e^{-t\lambda}dt,\]
where $m$ is uniformly bounded.
\begin{cor} Let $p$ be of Laplace transform type. Then we can define an
operator\[
p(-\Delta)u=(-\Delta)\int_{0}^{\infty}m(t)e^{t\Delta}udt\]
for $u\in\D$ with a kernel \begin{equation}\label{laplace_kernel}
K_{p}(x,y)=\int_{0}^{\infty}(-\Delta_{1})h_{t}(x,y)m(t)dt.
\end{equation}
The kernel $K_{p}$ is smooth off the diagonal and it satisfies
the estimates
\begin{align}
    \vert K_{p}(x,y)\vert
    &\lesssim R(x,y)^{-d}\\
    \vert\Delta_{2}K_{p}(x,y)\vert
    &\lesssim R(x,y)^{-2d-1}.
    \end{align}
\end{cor}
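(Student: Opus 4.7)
The plan is to observe that the proof of Proposition~\ref{pro:Kernel_imaginary} used the function $t\mapsto t^{-i\alpha}$ only through its uniform pointwise bound $|t^{-i\alpha}|=1$, and so substituting $m(t)$ in its place and using $\|m\|_{\infty}<\infty$ will deliver the three conclusions (well-definedness and integral representation, smoothness off the diagonal, and the two size estimates).

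First I would verify that $p(-\Delta)$ is well-defined on $\D$ and coincides with integration against $K_{p}$. For an eigenfunction $\varphi$ with eigenvalue $\lambda\ge 0$, $e^{t\Delta}\varphi=e^{-t\lambda}\varphi$, so the absolute convergence of $\int_{0}^{\infty}m(t)e^{-t\lambda}\,dt$ (which holds for $\lambda>0$ because $m$ is bounded, and gives zero after applying $-\Delta$ when $\lambda=0$) yields $p(-\Delta)\varphi=p(\lambda)\varphi$. By linearity this defines $p(-\Delta)$ on $\D$. For $u\in\D$ with $x\notin\supp u$, Lemma~\ref{lem: diff inside} (which only requires $m\in L^{\infty}$) lets us move the Laplacian from $u$ onto $h_{t}(x,\cdot)$, and Fubini's theorem — justified by the off-diagonal heat kernel decay in \eqref{eq:heat_estimates} and boundedness of $m$ — gives
\[
p(-\Delta)u(x)=\int_{X}\Bigl(\int_{0}^{\infty}(-\Delta_{1})h_{t}(x,y)m(t)\,dt\Bigr)u(y)\,d\mu(y),
\]
which is exactly integration against $K_{p}$.

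Second, for smoothness and the estimates, I would reprise the central computation from the proof of Proposition~\ref{pro:Kernel_imaginary}. Using the symmetry of the heat kernel and $\Delta_{1}h_{t}=\Delta_{2}h_{t}=\partial_{t}h_{t}$, we have $(-\Delta_{2})^{k}(-\Delta_{1})^{j+1}h_{t}=\pm\,\partial_{t}^{l+1}h_{t}$ with $l=j+k$. Combined with the heat kernel bound and $|m(t)|\le\|m\|_{\infty}$, this gives
\[
\Bigl|\int_{0}^{\infty}(-\Delta_{2})^{k}(-\Delta_{1})^{j+1}h_{t}(x,y)\,m(t)\,dt\Bigr|
\lesssim \|m\|_{\infty}\int_{0}^{\infty}t^{-\frac{d}{d+1}-l}e^{-c(R(x,y)^{d+1}/t)^{\gamma}}\frac{dt}{t},
\]
and the substitution $u=R(x,y)^{d+1}/t$ bounds the right hand side by a constant multiple of $R(x,y)^{-d-(j+k)(d+1)}$. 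Specializing to $(j,k)=(0,0)$ yields $|K_{p}(x,y)|\lesssim R(x,y)^{-d}$, and to $(j,k)=(0,1)$ yields $|\Delta_{2}K_{p}(x,y)|\lesssim R(x,y)^{-2d-1}$. Uniform convergence of the integrals (in $j,k$) on compact sets away from $\{R(x,y)=0\}$ implies that every mixed Laplacian $\Delta_{2}^{k}\Delta_{1}^{j}K_{p}$ is continuous off the diagonal, hence $K_{p}$ is smooth there.

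I do not anticipate a significant obstacle: the only point requiring care is the interchange of integrals in the integral-representation step, but the combination of Lemma~\ref{lem: diff inside} and the exponential off-diagonal decay of $h_{t}$ against the bounded function $m$ handles this, just as $|t^{-i\alpha}|=1$ did in Proposition~\ref{pro:Kernel_imaginary}.
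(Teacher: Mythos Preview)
Your proposal is correct and takes essentially the same approach as the paper. The paper does not give a separate proof of this corollary at all: it merely records, in the surrounding remark, that the only property of $t\mapsto t^{-i\alpha}$ used in the proof of Proposition~\ref{pro:Kernel_imaginary} was its uniform boundedness, so the same argument goes through with any bounded $m$---which is precisely what you have spelled out in detail.
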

Therefore, for a function $p$ of Laplace transform type the operator  $p(-\Delta)$ is a Calder\'on-Zygmund operator and it extends to a
bounded operator on $L^{q}(\mu)$, $1<q<\infty$.
\end{rem}
We end this section by describing the dependence of the kernel $K_{i\alpha}$ on $\alpha$.
\begin{prop}
\label{pro:Riesz_differentiable}If $x\ne y$, the map $\alpha\mapsto K_{i\alpha}(x,y)$
is differentiable.
\end{prop}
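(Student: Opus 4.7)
The plan is to differentiate under the integral sign, separating the two $\alpha$-dependent factors in
\[
K_{i\alpha}(x,y) = C_{\alpha}\,I(\alpha), \qquad I(\alpha):=\int_{0}^{\infty}(-\Delta_{1})h_{t}(x,y)\,t^{-i\alpha}\,dt.
\]
The prefactor $C_{\alpha}=1/\Gamma(1-i\alpha)$ is manifestly smooth in $\alpha\in\mathbb{R}$ because $\Gamma$ is holomorphic and non-vanishing on a neighborhood of the line $1-i\mathbb{R}$. It therefore suffices to prove that $I$ is differentiable at every $\alpha\in\mathbb{R}$, with the formal derivative
\[
I'(\alpha) = -i\int_{0}^{\infty}(-\Delta_{1})h_{t}(x,y)\,(\log t)\,t^{-i\alpha}\,dt.
\]

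To justify the passage of $\partial_{\alpha}$ inside the integral I would use dominated convergence on the difference quotients. Writing $g(t,\alpha):=(-\Delta_{1})h_{t}(x,y)\,t^{-i\alpha}$ and using the elementary bound $|e^{-ih\log t}-1|\le |h\log t|$, one finds
\[
\left|\frac{g(t,\alpha+h)-g(t,\alpha)}{h}\right| \le |\log t|\,\bigl|(-\Delta_{1})h_{t}(x,y)\bigr|,
\]
uniformly in $h$, while the integrand converges pointwise in $t$ to $(-i\log t)\,g(t,\alpha)$ as $h\to 0$.

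The main obstacle, and the content of the proof, is to show that the dominating function $t\mapsto |\log t|\,|(-\Delta_{1})h_{t}(x,y)|$ is integrable on $(0,\infty)$ whenever $x\ne y$. Here I would invoke the heat kernel estimate \eqref{eq:heat_estimates} applied to $t\partial_{t}h_{t}=t\Delta_{1}h_{t}$, which gives
\[
\bigl|(-\Delta_{1})h_{t}(x,y)\bigr|\lesssim t^{-d/(d+1)-1}\exp\!\left(-c\bigl(R(x,y)^{d+1}/t\bigr)^{\gamma}\right).
\]
Substituting $s=R(x,y)^{d+1}/t$ reduces the integral to
\[
R(x,y)^{-d}\int_{0}^{\infty}\bigl((d+1)|\log R(x,y)|+|\log s|\bigr)\,s^{d/(d+1)-1}\,e^{-cs^{\gamma}}\,ds,
\]
which converges: $s^{d/(d+1)-1}$ is integrable at $0$ since $d/(d+1)>0$, the logarithmic factor contributes only a mild integrable singularity there, and the stretched exponential $e^{-cs^{\gamma}}$ controls $|\log s|$ at infinity. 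Dominated convergence then yields differentiability of $I$, and combined with smoothness of $C_{\alpha}$ and the Leibniz rule, differentiability of $\alpha\mapsto K_{i\alpha}(x,y)$ follows.
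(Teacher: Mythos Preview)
Your proof is correct and follows essentially the same approach as the paper: differentiate under the integral sign by showing the dominating function $|\log t|\,|\Delta_{1}h_{t}(x,y)|$ is integrable via the heat kernel estimate~\eqref{eq:heat_estimates}, then invoke a standard differentiation-under-the-integral result. The only minor differences are that you explicitly treat the prefactor $C_{\alpha}$ (which the paper leaves implicit) and that you verify integrability of $g$ via the substitution $s=R(x,y)^{d+1}/t$, whereas the paper splits the domain at $t=1$, using continuity on $[0,1]$ and the bound $\ln t\le\delta^{-1}t^{\delta}$ on $[1,\infty)$.
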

\begin{proof}
Let $x,y\in X$ such that $x\ne y$ and $f(t,\alpha):=\Delta_{1}h_{t}(x,y)t^{-i\alpha}$.
We know that $f(\cdot,\alpha)\in L^{1}(0,\infty)$ for all
$\alpha\in\mathbb{R}$. Since\[
\frac{\partial}{\partial\alpha}f(t,\alpha)=\Delta_{1}h_{t}(x,y)(-i)t^{-i\alpha}\ln t,\]
it suffices to show that $g(t)=t^{-d/(d+1)-1}\exp\bigl(-c\left(\frac{R(x,y)^{d+1}}{t}\right)^{\gamma}\bigr)\vert\ln t\vert$
is integrable and apply a standard theorem (for example, \cite[Theorem 2.27]{Fol_99}).

As $g(t)$ is continuous on $[0,1]$ we look only at the integral over $[1,\infty)$.  Using that $\ln t\le\delta^{-1}t^{\delta}$
for any $\delta>0$ we have
\begin{equation*}
g(t)\leq \delta^{-1} t^{\delta-\frac{d}{(d+1)}-1}\exp\left(-c\left(\frac{R(x,y)^{d+1}}{t}\right)^{\gamma}\right)
\end{equation*}
which is integrable on $[1,\infty)$ provided $\delta<\frac{d}{d+1}$.
\end{proof}

\section{Bessel Potentials\label{sec:Bessel-Potentials}}

We next study the Bessel potentials on $X$, where $X$ is a nested fractal $K$ or an infinite blowup, without boundary, of $K$. Our analysis follows,
in large, \cite[Chapter 5.3]{Ste_PMS30_70} (see also \cite{Str_JFA03}).
In this spirit we consider the strictly positive operator $A=1-\Delta$.
Then $u$ is an eigenfunction of $A$ if and only if it is an eigenfunction
of $-\Delta$ and $A\varphi_{n}=(1+\lambda_{n})\varphi_{n}$. Recall
that $\D$ is the set of finite linear combinations of the eigenfunctions
$\varphi_{n}$.

To define the Bessel potentials on $X$ we recall that, for $\lambda>0$
and $\alpha\in\mathbb{C}$ with $\re\alpha<0$, we have that\begin{equation}
\lambda^{\alpha}=\frac{1}{\Gamma(-\alpha)}\int_{0}^{\infty}e^{-\lambda t}t^{-\alpha-1}dt,\label{eq:power_lambda}\end{equation}
where $\Gamma$ is the Gamma function $\Gamma(z)=\int_{0}^{\infty}t^{z-1}e^{-t}dt$
if $\re z>0$.
\begin{defn}[Bessel Potentials]
Let $\alpha\in\mathbb{C}$ with $\re\alpha<0$. For an eigenfunction $\varphi$ with eigenvalue $\lambda$ we want that $(I-\Delta)^{\alpha}\varphi=A^\alpha\varphi =(1+\lambda)^\alpha \varphi$.  This motivates us to define, for $u\in\D$,
\[
(I-\Delta)^{\alpha}u=A^{\alpha}u=\frac{1}{\Gamma(-\alpha)}\int_{0}^{\infty}t^{-\alpha-1}e^{-t}e^{t\Delta}udt.\]
\end{defn}
\begin{prop}
\label{pro:group}Let $\alpha\in\mathbb{C}$ such that $\re\alpha<0$.
\begin{enumerate}
\item If $u=\sum a_{k}\varphi_{k}\in\D$ then\[
A^{\alpha}u=\sum a_{k}(1+\lambda_{k})^{\alpha}\varphi_{k}.\]

\item If $\beta\in\mathbb{C}$ such that $\re\beta<0$
then $A^{\alpha}A^{\beta}u=A^{\alpha+\beta}u$ for all $u\in\D$.
\end{enumerate}
\end{prop}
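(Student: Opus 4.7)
Both assertions can be reduced to a scalar computation on an individual eigenfunction, using only the spectral action of the heat semigroup together with the scalar identity
\begin{equation*}
    \lambda^{\alpha}=\frac{1}{\Gamma(-\alpha)}\int_{0}^{\infty}t^{-\alpha-1}e^{-\lambda t}\,dt,\qquad \re\alpha<0,\ \lambda>0,
\end{equation*}
from \eqref{eq:power_lambda}. The key observation is that $u\in\D$ is by definition a \emph{finite} sum of eigenfunctions, so all exchanges of sum and integral are automatic.

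For part (1), write $u=\sum_{k}a_{k}\varphi_{k}$ (finite sum) and note that since each $\varphi_{k}$ is an eigenfunction of $-\Delta$ with eigenvalue $\lambda_{k}\ge 0$, we have $e^{t\Delta}\varphi_{k}=e^{-t\lambda_{k}}\varphi_{k}$. Inserting this into the definition of $A^{\alpha}u$ and interchanging the finite sum with the integral yields
\begin{equation*}
    A^{\alpha}u=\frac{1}{\Gamma(-\alpha)}\sum_{k}a_{k}\varphi_{k}\int_{0}^{\infty}t^{-\alpha-1}e^{-(1+\lambda_{k})t}\,dt.
\end{equation*}
The substitution $s=(1+\lambda_{k})t$, valid because $1+\lambda_{k}>0$, converts the scalar integral to $(1+\lambda_{k})^{\alpha}\Gamma(-\alpha)$; convergence at $0$ and $\infty$ follows from $\re(-\alpha)>0$. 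Cancelling $\Gamma(-\alpha)$ gives $A^{\alpha}u=\sum_{k}a_{k}(1+\lambda_{k})^{\alpha}\varphi_{k}$.

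For part (2), the assertion is immediate from (1). By (1) applied with exponent $\beta$, $A^{\beta}u=\sum_{k}a_{k}(1+\lambda_{k})^{\beta}\varphi_{k}$, which is once again a finite linear combination of the $\varphi_{k}$ and hence lies in $\D$. Applying (1) to this element with exponent $\alpha$ gives $A^{\alpha}A^{\beta}u=\sum_{k}a_{k}(1+\lambda_{k})^{\alpha+\beta}\varphi_{k}$. Since $\re(\alpha+\beta)<0$, a third application of (1) with exponent $\alpha+\beta$ identifies the right-hand side with $A^{\alpha+\beta}u$.

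\textbf{Where the difficulty would lie.} There is essentially no obstacle in the stated setting: the restriction of the domain to $\D$ makes all Fubini-type exchanges trivial and keeps $\D$ invariant under each $A^{\alpha}$. The genuinely delicate point, which does \emph{not} arise here, would be extending the semigroup property to the boundary case $\re\alpha=0$ or to a larger domain in $L^{2}(\mu)$; there one would need the spectral theorem or analytic continuation in $\alpha$ rather than the integral representation \eqref{eq:power_lambda}.
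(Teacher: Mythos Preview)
Your proof is correct and follows essentially the same approach as the paper: compute on a single eigenfunction using $e^{t\Delta}\varphi_{k}=e^{-t\lambda_{k}}\varphi_{k}$, exchange the finite sum with the integral, and reduce to the scalar identity \eqref{eq:power_lambda}; part~(2) then follows immediately from~(1). Your write-up is in fact slightly more explicit than the paper's, in particular noting that $A^{\beta}u\in\D$ so that~(1) may be reapplied.
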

\begin{proof}
For the first assertion, recall that by the spectral theorem, if $u=\sum a_{k}\varphi_{k}\in\D$
is a finite sum then \[
e^{t\Delta}\bigl(\sum a_{k}\varphi_{k}\bigr)=\sum a_{k}e^{-t\lambda_{k}}\varphi_{k}.\]
Therefore we can exchange the sum and the integral in\begin{eqnarray*}
A^{\alpha}\bigl(\sum a_{k}\varphi_{k}\bigr) & = & \sum a_{k}\frac{1}{\Gamma(-\alpha)}\int_{0}^{\infty}t^{-\alpha-1}e^{-(1+\lambda_{k})t}dt\varphi_{k}\\
 & = & \sum a_{k}(1+\lambda_{k})^{\alpha}\varphi_{k}.\end{eqnarray*}
The second assertion is an immediate consequence of the first.
\end{proof}

Based on the above proposition we can extend the definition of $A{}^{\alpha}$
on $\D$ to arbitrary $\alpha\in\mathbb{C}$ via $A^{\alpha}=A{}^{k}A^{\alpha-k}$,
where $k$ is an integer such that $-1\le\re\alpha-k<0$, if $\re\alpha\ge0$.
Then $\{A{}^{\alpha}\}_{\alpha\in\mathbb{C}}$ is a group so that
$A^{1}=A$.

We show next that the operators $A^{\alpha}$, $\re\alpha<0$, defined
originally on $\D$, extend to bounded operators on $L^{p}(\mu)$
for all $1\le p\le\infty$. We accomplish this by studying the kernels
of the operators. The main tools we use are the heat kernel estimates
together with Lemma \ref{lem:int_exp_X}.  Since estimates of this type will be needed several times, we give the argument for the most general kernel we will encounter.
\begin{prop}\label{pro:kernel_gen_powers}
  For $s\in\mathbb{R}$, $m\in L^\infty ([0,\infty))$, and $x\ne y$ define
  \begin{equation}
    \label{eq:kernel_gen_powers}
    L_{s,m}(x,y)=\int_0^\infty m(t)t^{\frac{s}{d+1}}h_t(x,y)e^{-t}\frac{dt}{t}.
  \end{equation}
Then
\begin{equation} \label{eq:Lsm bounds}
    \vert L_{s,m}(x,y)\vert\lesssim
    \begin{cases}
        \frac{\Gamma(s)}{d-s} \Bigl( R(x,y)^{s-d} + R(x,y)^{\frac{\gamma(s-d)}{\gamma+1}} \Bigr) e^{-R(x,y)^{\frac{\gamma(d+1)}{\gamma+1}}} &\text{ if $s<d$,}\\
        \bigl(1-\log R(x,y)\bigr) e^{-R(x,y)^{\frac{\gamma(d+1)}{\gamma+1}}} &\text{ if $s=d$,} \\
        \frac{1}{s-d} R(x,y)^{\frac{\gamma(s-d)}{\gamma+1}} e^{-R(x,y)^{\frac{\gamma(d+1)}{\gamma+1}}} & \text{ if $s> d$.}
        \end{cases}
\end{equation}
In particular, for $s>0$, $L_{s,m}(x,\cdot)\in L^1(\mu)$ for all $x\in X$ and $\Vert L_{s,m}(x,\cdot)\Vert_1\le C$, with $C$ a constant independent of $x$.  Similarly $s>\frac{d}{2}$ implies $L_{s,m}(x,\cdot)\in L^2(\mu)$ for all $x\in X$ with a uniform bound on the $L^{2}$ norm.  Moreover if $s>d$ then $L_{s,m}(x,y)$ is uniformly bounded.
\end{prop}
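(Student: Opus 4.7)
The plan is to reduce the statement to a scalar integral via the heat kernel upper bound from \eqref{eq:heat_estimates} and then extract the exponential decay factor using an elementary minimization. With $R = R(x,y)$, pulling $\|m\|_\infty$ outside gives
\[
    |L_{s,m}(x,y)| \lesssim \int_0^\infty t^{(s-d)/(d+1)-1}\exp\bigl(-c R^{\gamma(d+1)}/t^\gamma - t\bigr)\,dt,
\]
since $t^{s/(d+1)} t^{-d/(d+1)} = t^{(s-d)/(d+1)}$ and the extra $dt/t$ matches. For the decay factor I would use the inequality $cR^{\gamma(d+1)}/t^\gamma + t \ge C_1 R^{\gamma(d+1)/(\gamma+1)}$, which follows by minimizing the left-hand side in $t$ (minimum attained at $t_\ast = (\gamma c R^{\gamma(d+1)})^{1/(\gamma+1)}$). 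Replacing the exponent by half of itself plus half of the minimum pulls $e^{-C_1 R^{\gamma(d+1)/(\gamma+1)}/2}$ out of the integral and leaves a residual integral of the same shape with $c$ replaced by $c/2$.

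Next I would split the residual integral at $t=1$. On $(1,\infty)$ drop $\exp(-cR^{\gamma(d+1)}/(2t^\gamma))$ to obtain $\int_1^\infty t^{(s-d)/(d+1)-1}e^{-t/2}\,dt$, which is a constant depending only on $s,d$. On $(0,1)$ drop $e^{-t/2}$ and perform the substitution $u = cR^{\gamma(d+1)}/(2t^\gamma)$, which converts the integral into a power of $R$ times an upper-incomplete Gamma integral: up to constants,
\[
    R^{s-d}\int_{cR^{\gamma(d+1)}/2}^{\infty} u^{(d-s)/(\gamma(d+1))-1} e^{-u}\,du.
\]
For $s<d$ the exponent of $u$ is negative and the incomplete Gamma is bounded (by $\Gamma((d-s)/(\gamma(d+1)))$, which behaves like $(d-s)^{-1}$ as $s\to d^-$), producing the $R^{s-d}$ contribution. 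For $s=d$ the integrand is $u^{-1}e^{-u}$ and $\int_{cR^{\gamma(d+1)}/2}^\infty u^{-1}e^{-u}\,du \sim -\log R$ as $R\to 0$. For $s>d$ the integral $\int_0^\infty u^{(d-s)/(\gamma(d+1))-1}e^{-u}\,du$ diverges at $0$, so one must instead bound the original integral by splitting at the critical scale $t_\ast = R^{\gamma(d+1)/(\gamma+1)}$ and estimating the integrand size there; this produces the $R^{\gamma(s-d)/(\gamma+1)}$ factor. Combining these estimates with the prefactor gives exactly \eqref{eq:Lsm bounds}.

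For the $L^{1}$ claim I would integrate the bound \eqref{eq:Lsm bounds} over $y$. Using the volume growth $\mu(B(x,r)) \sim r^{d}$ in the resistance metric, the singular part near the diagonal contributes $\int_{R<1} R^{s-d}\,d\mu(y) \sim \int_0^1 r^{s-1}\,dr$, which is finite precisely when $s>0$. The tail is handled by the exponential factor: since $R^{\gamma(d+1)/(\gamma+1)}$ plays the role of the ``heat-time'' variable in Lemma~\ref{lem:int_exp_X} (up to a change of variables $t \leftrightarrow R^{\gamma(d+1)/(\gamma+1)}$), the tail contribution is uniformly bounded in $x$. The analogous calculation for $L^{2}$ gives $\int_0^1 r^{2s-d-1}\,dr < \infty$ iff $s>d/2$, and for $s>d$ the bound \eqref{eq:Lsm bounds} has no singularity in $R$ so $L_{s,m}$ is uniformly bounded.

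The main technical obstacle I anticipate is organizing the case analysis so that the sharp constants in each regime of $s$ fit inside the three-case bound \eqref{eq:Lsm bounds} simultaneously; in particular, matching the precise behavior of the incomplete Gamma function as $s\to d$ in each direction, and justifying that the single factor $e^{-R^{\gamma(d+1)/(\gamma+1)}}$ (rather than $e^{-C_1 R^{\gamma(d+1)/(\gamma+1)}/2}$) suffices, which is a matter of absorbing constants into the $\lesssim$.
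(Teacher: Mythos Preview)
Your approach is correct and gives bounds adequate for all the stated consequences, but it is organized differently from the paper's proof and does not reproduce the precise polynomial prefactors in~\eqref{eq:Lsm bounds}.  The paper never extracts the exponential factor in advance; instead it substitutes $t\mapsto tR(x,y)^{d+1}$ and then splits uniformly at the $R$-dependent critical scale $\delta=R(x,y)^{-(d+1)/(\gamma+1)}$ (equivalently at your $t_\ast$ in the original variable).  On each piece one of the two exponentials is dropped and the remainder becomes an incomplete Gamma integral over $[\delta^{-\gamma},\infty)$; both the decay factor $e^{-cR^{\gamma(d+1)/(\gamma+1)}}$ and the intermediate power $R^{\gamma(s-d)/(\gamma+1)}$ then emerge from the boundary behavior at $\delta^{-\gamma}$.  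Your scheme instead pulls out the exponential via the minimization inequality and splits the residual at the fixed scale $t=1$.  This cleanly isolates the leading singularity $R^{s-d}$ on $(0,1)$ and the decay factor up front, but the piece over $(1,\infty)$ contributes only a constant in $R$, not the term $R^{\gamma(s-d)/(\gamma+1)}$; so for $s<d$ you obtain $(C+R^{s-d})e^{-cR^{\gamma(d+1)/(\gamma+1)}}$ rather than the stated bound.  You already recognize that for $s>d$ the fixed split fails and one must use $t_\ast$, which is exactly what the paper does throughout.  Either route suffices for the $L^1$, $L^2$, and $L^\infty$ conclusions, since those depend only on the diagonal singularity and the exponential tail; the trade-off is that your argument is a little simpler but the paper's yields sharper control of the large-$R$ polynomial prefactor in a single pass.
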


\begin{proof}
For $s\in \mathbb{R}$ make the substitution
$t=uR(x,y)^{d+1}$, from which
\begin{align*}
    \vert L_{s,m}(x.y)\vert
    &\lesssim \int_0^\infty t^{\frac{s-d}{d+1}-1}e^{-t}e^{- c\bigl(\frac{R(x,y)^{d+1}}{t}\bigr)^{\gamma}}\,dt\\
    &= R(x,y)^{s-d}\int_0^\infty t^{\frac{s-d}{d+1}-1}e^{-tR(x,y)^{d+1}}e^{- ct^{-\gamma}}\,dt \\
    &= R(x,y)^{s-d}\int_0^\delta t^{\frac{s-d}{d+1}-1}e^{-tR(x,y)^{d+1}}e^{- ct^{-\gamma}}\,dt \\
    &\quad + R(x,y)^{s-d}\int_\delta^\infty t^{\frac{s-d}{d+1}-1}e^{-tR(x,y)^{d+1}}e^{-ct^{-\gamma}}\,dt\\
    &=: R(x,y)^{s-d}(I_1+I_2),
\end{align*}
where $\delta=R(x,y)^{-\frac{d+1}{\gamma+1}}$.  The intervals of validity of the estimates~\eqref{eq:Lsm bounds} arise naturally in estimating $I_1$ and $I_2$.

To bound $I_1$ we use that $e^{-tR(x,y)^{d+1}}\le 1$ on the interval, and make the change of variable
$t=u^{-\gamma}$ to find
\begin{equation}\label{eq: kernel_gen_powers intermed step}
  I_1
  \le \int_{\delta^{-\gamma}}^\infty u^{-\frac{s-d}{\gamma(d+1)}}e^{-cu}\,\frac{du}{u}.
\end{equation}
The exponential decay in the integrand implies this is bounded by a constant multiple $c(s)$ of the integral over the unit length interval $[\delta^{-\gamma},\delta^{-\gamma}+1]$.  It is easy to see $c(s)\leq 1$ for $s\geq d$ and $c(s)\leq\Gamma(s)$ otherwise.  We bound the exponential term by $e^{-c\delta^{-\gamma}}$, and integrate the polynomial term to obtain $\bigl.\frac{\gamma(d+1)}{d-s} u^{-\frac{s-d}{\gamma(d+1)}}\bigr|_{\delta^{-\gamma}}^{1+\delta^{-\gamma}}$ unless $s=d$ where it is $\bigl.\log u\bigr|_{\delta^{-\gamma}}^{1+\delta^{-\gamma}}$.  If $s<d$ we bound by the value at the upper endpoint, obtaining $(1+\delta^{-\gamma})^{\frac{d-s}{d+1}}\leq 1+R(x,y)^{\frac{d-s}{\gamma+1}}$.  If $s=d$ it is easy to see the bound is by $1+\gamma\log \delta\lesssim 1-\log R(x,y)$.  And if $s>d$ we bound by the value at the lower endpoint, which is $\delta^{-\gamma\frac{d-s}{d+1}}=R(x,y)^{\frac{d-s}{\gamma+1}}$.
Combining these we have found
\begin{equation*}
    I_{1}
    \lesssim
    \begin{cases}
        \frac{1}{d-s} \bigl(1+ R(x,y)^{-\frac{s-d}{\gamma+1}}\bigr) e^{-c R(x,y)^{\frac{\gamma(d+1)}{\gamma+1}}} &\text{ if $s< d$,}\\
        \bigl( 1 - \log R(x,y) \bigr) e^{-c R(x,y)^{\frac{\gamma(d+1)}{\gamma+1}}} &\text{ if $s=d$,}\\
        \frac{1}{s-d} R(x,y)^{-\frac{s-d}{\gamma+1}} e^{-c R(x,y)^{\frac{\gamma(d+1)}{\gamma+1}}} &\text{ if $s>d$.}
        \end{cases}
    \end{equation*}

For the estimate of $I_2$ we use that $e^{-ct^{-\gamma}}\leq1$ on the interval, so that with $u=tR(x,y)^{d+1}$ we obtain
\begin{equation*}
    I_{2}
    \leq \int_\delta^\infty t^{\frac{s-d}{d+1}}e^{-tR(x,y)^{d+1}}\frac{dt}{t}
    = R(x,y)^{d-s} \int_{\delta R(x,y)^{d+1}}^\infty u^{\frac{s-d}{d+1}}e^{-u}\frac{du}{u}
    \end{equation*}
This integral is the same as in~\eqref{eq: kernel_gen_powers intermed step}, except that the power in the integrand is $\frac{s-d}{d+1}$ not $\frac{s-d}{\gamma(d+1)}$. Notice that the lower endpoint is $\delta R(x,y)^{d+1}=R(x,y)^{\frac{\gamma(d+1)}{\gamma+1}}=\delta^{-\gamma}$.  We must therefore have the same estimates for the integral that we did for $I_1$, but with the power $R(x,y)^{-\frac{s-d}{\gamma+1}}$ replaced by $R(x,y)^{-\frac{\gamma(s-d)}{\gamma+1}}$ throughout.  Multiplying through by the leading $R(x,y)^{d-s}$ factor makes these estimates the same or smaller than the corresponding ones for $I_{1}$, which completes the proof of~\eqref{eq:Lsm bounds}.

Observe that the upper bound for $s>d$ is itself uniformly bounded, so $L_{s,m}(x,y)$ is uniformly bounded in this case.  Also the bound is integrable for large $R(x,y)$ because it has exponential decay, and the the singularity in the bound (which occurs when $s\leq d$) is integrable provided $s>0$. Hence $\Vert L_{s,m}(x,\cdot)\Vert_1\le C$ with $C$ independent of $x$.  Finally we note that the singularity is in $L^{2}$ if $s=d$, or if $s<d$ and $2(s-d)+d<0$, meaning $s>\frac{d}{2}$.
\end{proof}

\begin{cor}
\label{pro:kernel_integrable}For $\re\alpha<0$ define
 \[
K_{\alpha}(x,y)=\frac{1}{\Gamma(-\alpha)}\int_{0}^{\infty}h_{t}(x,y)t^{-\alpha-1}e^{-t}dt.\]
Then $K_{\alpha}(\cdot,y)$ is integrable for all $y\in X$ with $\Vert K_{\alpha}(\cdot,y)\Vert_{1}\le C$
with $C$ independent of $y$. The same statements are true for
$K_{\alpha}(x,\cdot)$.
\end{cor}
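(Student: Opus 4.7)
The plan is to derive this as an immediate consequence of Proposition~\ref{pro:kernel_gen_powers}, by recognizing $|K_{\alpha}(x,y)|$ as pointwise dominated by a kernel $L_{s,m}(x,y)$ with positive $s$ and bounded $m$, and then transferring the $L^{1}$ bound to both variables via symmetry of the heat kernel.

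Let $a = \re\alpha < 0$. Since $|t^{-\alpha-1}| = t^{-a-1}$ and $h_{t}(x,y) > 0$,
\[
    |K_{\alpha}(x,y)|
    \leq \frac{1}{|\Gamma(-\alpha)|}\int_{0}^{\infty} h_{t}(x,y) t^{-a-1} e^{-t}\,dt
    = \frac{1}{|\Gamma(-\alpha)|}\int_{0}^{\infty} t^{-a} h_{t}(x,y) e^{-t}\,\frac{dt}{t}.
\]
The right hand side coincides with $L_{s,m}(x,y)$ upon taking $s = -a(d+1)$ and $m$ equal to the constant function $1/|\Gamma(-\alpha)|$. Since $a<0$ we have $s>0$, and $m$ is trivially bounded on $[0,\infty)$, so Proposition~\ref{pro:kernel_gen_powers} applies.

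That proposition supplies a constant $C$, depending on $\alpha$ and on the fractal data but not on $x$, such that $\|L_{s,m}(x,\cdot)\|_{1} \leq C$ for all $x$. This gives $\|K_{\alpha}(x,\cdot)\|_{1} \leq C$ uniformly in $x$, establishing the second statement of the corollary. For the first statement, use symmetry of the heat kernel, $h_{t}(x,y) = h_{t}(y,x)$, from which $K_{\alpha}(x,y) = K_{\alpha}(y,x)$; consequently $\|K_{\alpha}(\cdot,y)\|_{1} = \|K_{\alpha}(y,\cdot)\|_{1} \leq C$ for every $y$.

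There is no substantive obstacle: all of the analytic work is already contained in Proposition~\ref{pro:kernel_gen_powers}, and what remains is simply the identification of parameters. The only points to verify are that $s > 0$ (which is precisely where the hypothesis $\re\alpha < 0$ enters) and that the multiplier $m$ is bounded, both of which are immediate.
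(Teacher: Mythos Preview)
Your proof is correct and follows essentially the same approach as the paper: both reduce directly to the $L^{1}$ estimate of Proposition~\ref{pro:kernel_gen_powers} with $s=-a(d+1)$, the only cosmetic difference being that the paper takes $m(t)=t^{ib}$ (where $\alpha=a+ib$) while you first pass to absolute values and use a constant $m$. Your added remark on symmetry of the heat kernel to handle $K_{\alpha}(\cdot,y)$ is a fine way to make explicit what the paper leaves implicit.
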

\begin{proof}
Let $\alpha=a+ib$, with $a<0$. This is an immediate consequence of the $L^{1}$ estimate in Proposition~\ref{pro:kernel_gen_powers} with $s=-a(d+1)$ and $m(t)=t^{ib}$.
\end{proof}

As a consequence of the above result we obtain that the operators
$A^{\alpha}$ are bounded operators on $L^{p}(\mu)$ for $1\le p\le\infty$
if $\re\alpha<0$. We prove this statement in the following.
\begin{thm}
\label{thm:Lp_negative}Let $\alpha$ be such that $\re\alpha<0$.
Then $A^{\alpha}$ is given by integration with respect to $K_{\alpha}$,
that is\begin{equation}
A^{\alpha}f(x)=\int_{X}K_{\alpha}(x,y)f(y)d\mu(y),\label{eq:kernel_Bessel}\end{equation}
for all $f\in\D$. In particular, the operator $A^{\alpha}$ defined
originally on $\D$ extends to a bounded operator on $L^{p}(\mu)$
for all $1\le p\le\infty$.\end{thm}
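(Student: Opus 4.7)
My approach is to first establish the integral formula~\eqref{eq:kernel_Bessel} on the dense subspace $\D$ by applying Fubini, and then deduce the $L^p$ boundedness via Schur's test using the uniform $L^1$ kernel bounds supplied by Corollary~\ref{pro:kernel_integrable}.

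To prove~\eqref{eq:kernel_Bessel}, I would fix $u\in\D$ and use that $e^{t\Delta}u(x)=\int_X h_t(x,y)u(y)\,d\mu(y)$, which holds because $u$ is a finite linear combination of eigenfunctions and the heat semigroup is given by integration against $h_t$. Substituting this representation into the definition of $A^{\alpha}u$ produces a double integral in $t$ and $y$; to justify exchanging the order I would check absolute convergence. Since eigenfunctions have compact support in the blow-up case (and $K$ is compact otherwise), $u$ is bounded with compact support, so absolute integrability reduces to showing that $\int_{\supp u}\int_0^\infty t^{-\re\alpha-1}e^{-t}h_t(x,y)\,dt\,d\mu(y)$ is finite. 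This is exactly $|\Gamma(-\re\alpha)|\cdot\|K_{\re\alpha}(x,\cdot)\|_{L^1(\supp u)}$, which is bounded by Corollary~\ref{pro:kernel_integrable}. Fubini's theorem then yields~\eqref{eq:kernel_Bessel}.

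For the $L^p$ boundedness, I would define the integral operator $Tf(x):=\int_X K_\alpha(x,y)f(y)\,d\mu(y)$ and apply Schur's test. Corollary~\ref{pro:kernel_integrable} provides the two uniform bounds $\sup_x\|K_\alpha(x,\cdot)\|_1\le C$ and $\sup_y\|K_\alpha(\cdot,y)\|_1\le C$: the first immediately gives $\|Tf\|_\infty\le C\|f\|_\infty$, while the second, via Tonelli, gives $\|Tf\|_1\le C\|f\|_1$. Riesz--Thorin interpolation then extends $T$ to a bounded operator on $L^p(\mu)$ for every $1\le p\le\infty$. Finally, density of $\D$ in $L^p(\mu)$ for $1\le p<\infty$ (which follows from the spectral decomposition) together with~\eqref{eq:kernel_Bessel} shows that $T$ is the unique continuous extension of $A^{\alpha}$; for $p=\infty$ one simply takes~\eqref{eq:kernel_Bessel} as the definition of $A^{\alpha}f$.

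The only non-routine step is justifying Fubini in the first stage, and this is where the compact support of $u\in\D$ does essentially all the work, reducing the question to the integrability statement already proved in Corollary~\ref{pro:kernel_integrable}. The $L^p$ bounds then follow by a textbook application of Schur's test plus interpolation, with no further analytical input required.
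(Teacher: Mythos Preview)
Your proposal is correct and follows essentially the same approach as the paper. The paper's proof is even more terse: it declares~\eqref{eq:kernel_Bessel} ``clear'' and obtains the $L^{p}$ boundedness from the uniform $L^{1}$ bounds of Corollary~\ref{pro:kernel_integrable} via ``the classical proof of Young's inequality via the generalized Minkowski inequality,'' which is exactly your endpoint-plus-interpolation argument under a different name.
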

\begin{proof}
The proof of \eqref{eq:kernel_Bessel} is clear. The second part follows
immediately from the estimates of Corollary \ref{pro:kernel_integrable}
by an argument analogous to the classical proof of the Young's inequality via the generalized Minkowski
inequality.
\end{proof}

\begin{rem*}
The boundedness of the operators $A^{\alpha}$ on $L^{p}(\mu)$, for
$1<p<\infty$, can also be obtained using the spectral theorems of
\cite{DuOuSi_JFA02} (see \cite[Proposition 3.2]{Str_JFA03}).\end{rem*}

\begin{prop}
\label{pro:Hilbert-Schmidt}On the compact set $K$ the operator $A^{\alpha}$
is Hilbert-Schmidt when $\re\alpha<-\frac{d}{2(d+1)}$.
\end{prop}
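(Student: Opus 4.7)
The plan is to recognize $A^{\alpha}$ as the integral operator with kernel $K_{\alpha}$ from Theorem~\ref{thm:Lp_negative} and to show directly that this kernel lies in $L^{2}(K\times K,\mu\otimes\mu)$, which is the standard characterization of a Hilbert--Schmidt operator on $L^{2}(\mu)$.

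First I would write $\alpha=a+ib$ with $a<-\frac{d}{2(d+1)}$ and match the defining integral for $K_{\alpha}(x,y)$ against the family $L_{s,m}(x,y)$ of Proposition~\ref{pro:kernel_gen_powers}. Setting $s=-a(d+1)$ and $m(t)=t^{-ib}$, one has $|m(t)|=1$, so $m\in L^{\infty}([0,\infty))$, and
\[
    K_{\alpha}(x,y)=\frac{1}{\Gamma(-\alpha)}\,L_{s,m}(x,y),
\]
up to a constant of modulus $1/|\Gamma(-\alpha)|$. The hypothesis $\re\alpha<-\frac{d}{2(d+1)}$ is exactly the condition $s>d/2$ required by Proposition~\ref{pro:kernel_gen_powers} for the uniform $L^{2}$ bound
\[
    \int_{K} |K_{\alpha}(x,y)|^{2}\,d\mu(y) \;\leq\; C,
\]
with $C$ independent of $x\in K$.

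Next I would integrate this uniform estimate with respect to $x$ over the compact fractal $K$, which carries finite measure $\mu(K)<\infty$, to conclude that
\[
    \|K_{\alpha}\|_{L^{2}(K\times K)}^{2}
    = \int_{K}\int_{K} |K_{\alpha}(x,y)|^{2}\,d\mu(y)\,d\mu(x)
    \leq C\,\mu(K) < \infty.
\]
Since Theorem~\ref{thm:Lp_negative} represents $A^{\alpha}$ as integration against $K_{\alpha}$ and $A^{\alpha}$ is a bounded operator on $L^{2}(\mu)$, the finiteness of this double integral is precisely the Hilbert--Schmidt condition, completing the argument.

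I do not expect a serious obstacle here: the quantitative work has already been done in Proposition~\ref{pro:kernel_gen_powers}, and the only care needed is in verifying the parameter correspondence $s=-\re\alpha\,(d+1)$ so that the critical threshold $s>d/2$ translates into the stated bound on $\re\alpha$, together with noting that compactness of $K$ (so that $\mu(K)<\infty$) is what allows the uniform $L^{2}$ bound in one variable to be upgraded to membership in $L^{2}(K\times K)$. This is also the reason the statement is restricted to $K$ rather than to an infinite blow-up.
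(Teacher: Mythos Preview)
Your proposal is correct and follows essentially the same route as the paper: identify $K_{\alpha}$ with $L_{s,m}$ for $s=-a(d+1)$ and a bounded $m$, invoke the $s>d/2$ case of Proposition~\ref{pro:kernel_gen_powers} to get a uniform $L^{2}$ bound in one variable, and then integrate over the compact set $K$ to obtain $K_{\alpha}\in L^{2}(K\times K)$. The paper's proof is a one-line compression of exactly this argument.
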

\begin{proof}
Let $\alpha=a+ib$, set $s=-a(d+1)$ and $m(t)=t^{ib}$.  Then $s>\frac{d}{2}$ so we can apply Proposition~\ref{pro:kernel_gen_powers} to see $\|K_{\alpha}(x,\cdot)\|_{L^{2}}$ is uniformly bounded.  Integrating with respect to $x$ produces a factor of $\mu(K)<\infty$, so $K_{\alpha}(x,y)$ is in $L^{2}$ of the product space.
\end{proof}
\begin{cor}\label{cor: Hilbert Schmidt}
Assume that $X$ is compact and $\re\alpha<-\frac{d}{2(d+1)}$.
Then\[
K_{\alpha}(x,y)=\sum_{n}(1+\lambda_{n})^{\alpha}\varphi_{n}(x)\varphi_{n}(y),\]
where the infinite sum converges in $L^{2}(\mu\times\mu)$.
\end{cor}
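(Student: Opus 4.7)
The strategy is to combine the Hilbert-Schmidt property from Proposition \ref{pro:Hilbert-Schmidt} with a Fourier expansion of $K_\alpha$ in $L^2(\mu \times \mu)$, using an orthonormal basis built from the eigenfunctions, and then to read off the coefficients from the spectral action of $A^\alpha$.

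First I would note that since $\mu$ is a finite Borel measure on the compact set $X=K$ and $\{\varphi_n\}$ is an orthonormal basis of $L^2(\mu)$, the family of tensor products $\{\varphi_n\otimes\varphi_m\}_{n,m}$, defined by $(\varphi_n\otimes\varphi_m)(x,y)=\varphi_n(x)\varphi_m(y)$, is an orthonormal basis of $L^2(\mu\times\mu)$. This is the standard identification of the Hilbert space tensor product $L^2(\mu)\otimes L^2(\mu)$ with $L^2(\mu\times\mu)$. By Proposition \ref{pro:Hilbert-Schmidt}, the hypothesis $\re\alpha<-d/(2(d+1))$ places $K_\alpha$ in $L^2(\mu\times\mu)$, so it admits a Fourier expansion
\[ K_\alpha=\sum_{n,m}c_{nm}\,\varphi_n\otimes\varphi_m \]
converging in $L^2(\mu\times\mu)$, with coefficients
\[ c_{nm}=\int_{X\times X}K_\alpha(x,y)\,\varphi_n(x)\,\varphi_m(y)\,d\mu(x)\,d\mu(y). \]
I implicitly take the $\varphi_n$ to be real-valued, which is legitimate because $-\Delta$ is self-adjoint on $L^2_{\mathbb{R}}(\mu)$ with finite-dimensional eigenspaces, so a real orthonormal basis may always be chosen.

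The main step is to evaluate $c_{nm}$. Since $\mu$ is finite we have $L^2(\mu\times\mu)\subset L^1(\mu\times\mu)$, so Fubini's theorem applies. The inner integral in $y$ equals
\[ \int_X K_\alpha(x,y)\,\varphi_m(y)\,d\mu(y)=A^\alpha\varphi_m(x)=(1+\lambda_m)^\alpha\varphi_m(x) \]
by Theorem \ref{thm:Lp_negative} together with Proposition \ref{pro:group}. Integrating against $\varphi_n$ and using the orthonormality of $\{\varphi_n\}$ in $L^2(\mu)$ gives
\[ c_{nm}=(1+\lambda_m)^\alpha\langle\varphi_m,\varphi_n\rangle=(1+\lambda_m)^\alpha\delta_{nm}, \]
so the double series collapses to the diagonal and one obtains the stated identity.

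No step poses a serious obstacle: the completeness of the tensor basis is classical, the spectral action on eigenfunctions is Proposition \ref{pro:group}, and the interchange of integrations is justified by the finite total measure. The only mild subtlety is the choice of real-valued eigenfunctions, needed so that the expansion reads $\varphi_n(x)\varphi_n(y)$ rather than $\varphi_n(x)\overline{\varphi_n(y)}$; one could alternatively state and prove the result with the complex conjugate in place and note that it reduces to the stated form when real eigenfunctions are selected.
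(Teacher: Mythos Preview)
Your argument is correct and is precisely the standard unwinding of the Hilbert--Schmidt property that the paper has in mind; note that the paper states this corollary without proof, treating it as an immediate consequence of Proposition~\ref{pro:Hilbert-Schmidt}. Your write-up supplies exactly the details one would expect (tensor basis, Fubini, spectral action via Theorem~\ref{thm:Lp_negative} and Proposition~\ref{pro:group}), and your remark about choosing real eigenfunctions is apt.
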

\begin{prop}\label{prop Bessel potential dependence on alpha}
If $\re \alpha<0$ then $K_{\alpha}$ is smooth off the diagonal. If in addition $\re \alpha\leq - \frac{d}{d+1}$ then
$K_{\alpha}$ is continuous and uniformly bounded. Also, the map $\alpha\mapsto K_{\alpha}(x,y)$ is analytic on $\{\re\alpha<0\}$, for all $x,y\in X$ with $x\ne y$.
\end{prop}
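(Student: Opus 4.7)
The proposition has three assertions: smoothness off the diagonal when $\re\alpha<0$, continuity and uniform boundedness when $\re\alpha\leq -d/(d+1)$, and analyticity in $\alpha$ on the half-plane $\{\re\alpha<0\}$. All three are proved by invoking the integral formula
\[
K_\alpha(x,y)=\frac{1}{\Gamma(-\alpha)}\int_{0}^{\infty}t^{-\alpha-1}e^{-t}h_{t}(x,y)\,dt
\]
and justifying exchange of integral with derivatives in $x$, $y$, or $\alpha$ via dominated convergence, using the heat kernel estimates together with Proposition~\ref{pro:kernel_gen_powers}.

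For smoothness off the diagonal, the plan is to show that for any non-negative integers $j,k$ the integrand may be differentiated, giving $\Delta_{1}^{j}\Delta_{2}^{k}h_{t}(x,y)=\partial_{t}^{j+k}h_{t}(x,y)$. Since the estimate~\eqref{eq:heat_estimates} holds for $t^{j+k}\partial_{t}^{j+k}h_{t}$, the modulus of the differentiated integrand is bounded by $t^{-\re\alpha-(j+k)-1}e^{-t}\,t^{-d/(d+1)}\exp(-c(R(x,y)^{d+1}/t)^{\gamma})$. With $\alpha=a+ib$ and $m(t)=t^{-ib}$, this fits the hypothesis of Proposition~\ref{pro:kernel_gen_powers} with $s=-(a+j+k)(d+1)$, so the integral converges for $x\neq y$ with a bound of the form $R(x,y)^{s-d}$ plus lower order terms. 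Locally uniform control off the diagonal then permits repeated differentiation under the integral and yields continuity of $\Delta_{1}^{j}\Delta_{2}^{k}K_{\alpha}$ on $\{x\neq y\}$.

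For continuity and uniform boundedness when $\re\alpha\leq -d/(d+1)$, I would apply Proposition~\ref{pro:kernel_gen_powers} directly to $K_{\alpha}$ itself, with $s=-\re\alpha\,(d+1)\geq d$ and $m(t)=t^{-i\im\alpha}$. In that regime the estimate~\eqref{eq:Lsm bounds} gives a bound on $|K_{\alpha}(x,y)|$ that decays exponentially in $R(x,y)$ and is integrable or bounded near the diagonal; the corresponding dominated convergence argument upgrades the pointwise estimate to joint continuity in $(x,y)$.

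For analyticity in $\alpha$, fix $x\neq y$ and set $I(\alpha)=\int_{0}^{\infty}t^{-\alpha-1}e^{-t}h_{t}(x,y)\,dt$. The integrand is holomorphic in $\alpha$, so the standard route is to produce a local dominant: on any compact disk contained in $\{\re\alpha<0\}$, whose real parts lie in $[a_{1},a_{2}]$ with $a_{2}<0$, the factor $|t^{-\alpha-1}|=t^{-\re\alpha-1}$ is controlled by $t^{-a_{1}-1}$ on $(0,1]$ and by $t^{-a_{2}-1}$ on $[1,\infty)$. Combining with the heat kernel bound~\eqref{eq:heat_estimates} and $e^{-t}$ yields an integrable dominant, so $I(\alpha)$ is analytic by Morera (or by differentiation under the integral). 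Since $1/\Gamma(-\alpha)$ is entire, $K_{\alpha}(x,y)=I(\alpha)/\Gamma(-\alpha)$ is analytic on $\{\re\alpha<0\}$. The main obstacle in the whole proof is simply bookkeeping of the exponents in Proposition~\ref{pro:kernel_gen_powers} to produce locally uniform bounds valid in each regime; no mean value theorem or finer analytic structure of the fractal is needed once those estimates are in hand.
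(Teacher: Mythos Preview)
Your proposal is correct and follows essentially the same approach as the paper: both differentiate under the integral using $\Delta_{1}^{j}\Delta_{2}^{k}h_{t}=\partial_{t}^{j+k}h_{t}$ together with the heat kernel estimates, then invoke Proposition~\ref{pro:kernel_gen_powers} with $s=-(a+j+k)(d+1)$ and $m(t)=t^{-ib}$ for smoothness and boundedness, and use a standard holomorphy-under-the-integral (Morera/dominated convergence) argument for analyticity in $\alpha$. The only cosmetic difference is that the paper cites Lemma~\ref{lem: diff inside} to justify moving the Laplacian inside the integral in the weak sense appropriate to the fractal setting, whereas you phrase this as dominated convergence; the content is the same.
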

\begin{proof}
Recall
$K_{\alpha}(x,y)=\int_{0}^{\infty}h_{t}(x,y)t^{-\alpha-1}e^{-t}dt$
with $\alpha=a+ib$.  To obtain smoothness off the diagonal, it
suffices by Lemma~\ref{lem: diff inside} that we differentiate inside
the integral.  Since $h_{t}$ is the heat kernel, applying the
Laplacian is the same as differentiating with respect to $t$, and we
know $t^{k}\frac{\partial^{k}}{\partial t^{k}}h_{t}$ satisfies the
same bounds as $h_t$ itself.  Then applying
Proposition~\ref{pro:kernel_gen_powers} for $m(t)=t^{ib}$ and
$s=-(a+j+k)(d+1)$ one can see that $\Delta^k_1\Delta^j_2
K_\alpha(x,y)$ is continuous off the diagonal for all $j,k\ge 1$. If
$\re \alpha\leq -\frac{d}{d+1}$ then a second application of the
Proposition shows $K_\alpha(x,y)$ is uniformly bounded. 

The second part follows by a standard argument (such as~\cite[Theorem 5.4]{StSha_03}), since
the map $F(\alpha,t)=h_{t}(x,y)t^{-\alpha-1}e^{-t}$ is analytic on
$\alpha$ for each $t>0$, and continuous in $\alpha$ and $t$.
\end{proof}

\subsection{Purely imaginary Bessel potentials}

We turn our attention now to the study of the kernel of purely imaginary
Bessel potentials, that is, operators of the form $(I-\Delta)^{i\alpha}$,
with $\alpha\in\mathbb{R}\setminus \{0\}$. We use formula \eqref{eq:imag_power_lambda}
as the starting point and, for $\alpha\in\mathbb{R}\setminus \{0\}$ and $u\in\D$,
we define\[
(I-\Delta)^{i\alpha}u=C_{\alpha}(I-\Delta)\int_{0}^{\infty}e^{t\Delta}ue^{-t}t^{i\alpha}dt.\]
For $\alpha\in\mathbb{R}\setminus \{0\}$, we define the kernel of $(I-\Delta)^{i\alpha}$
via
\begin{equation}
G_{i\alpha}(x,y)=i\alpha
C_{\alpha}\int_{0}^{\infty}h_{t}(x,y)e^{-t}t^{i\alpha-1}dt.\label{eq:ker_imag_Bessel}
\end{equation}

\begin{thm}
\label{thm:Kernel_Bessel_Imaginary}For $\alpha\in\mathbb{R}\setminus \{0\}$, $G_{i\alpha}(x,y)$
defined in \eqref{eq:ker_imag_Bessel} is the kernel of $(I-\Delta)^{i\alpha}$,
in the sense that\begin{equation}
(I-\Delta)^{i\alpha}u(x)=\int_{X}G_{i\alpha}(x,y)u(y)d\mu(y)\label{eq:kern_Besselop}\end{equation}
for all $u\in\D$ such that $x\notin\supp u$. Moreover, $G_{i\alpha}(x,y)$
is smooth off the diagonal.\end{thm}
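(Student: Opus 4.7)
The plan is to establish the integral representation first and then bootstrap the same techniques to show smoothness. The key algebraic identity behind both parts is that, away from the diagonal, we can integrate by parts in $t$ to move the action of $(I-\Delta)$ onto the factor $e^{-t}t^{i\alpha}$, trading it for a factor of $i\alpha/t$. This works because the heat kernel satisfies $\Delta_{1}h_{t}=\partial_{t}h_{t}$ and the boundary terms at $t=0$ and $t=\infty$ vanish: exponential decay of $h_{t}$ as $t\to0$ for $R(x,y)>0$ (from~\eqref{eq:heat_estimates}) kills the lower endpoint, while $e^{-t}$ handles the upper endpoint.

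For the integral representation, I would start from the definition
\[
    (I-\Delta)^{i\alpha}u(x)
    =C_{\alpha}(I-\Delta)\int_{0}^{\infty}e^{t\Delta}u\,e^{-t}t^{i\alpha}dt,
    \]
write $e^{t\Delta}u(x)=\int_{X}h_{t}(x,y)u(y)\,d\mu(y)$, and use Fubini (justified by the heat kernel bound~\eqref{eq:heat_estimates} together with the $L^{1}$-type estimate from Proposition~\ref{pro:kernel_gen_powers}) to obtain $(I-\Delta)^{i\alpha}u(x)=C_{\alpha}(I-\Delta_{1})\int_{X}u(y)F(x,y)\,d\mu(y)$ where $F(x,y)=\int_{0}^{\infty}e^{-t}t^{i\alpha}h_{t}(x,y)\,dt$. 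Since $u$ has compact support not containing $x$, I can differentiate under the $y$-integral, reducing the problem to computing $(I-\Delta_{1})F(x,y)$. Using $\Delta_{1}h_{t}=\partial_{t}h_{t}$ and integration by parts in $t$, the calculation
\[
    -\int_{0}^{\infty}e^{-t}t^{i\alpha}\partial_{t}h_{t}\,dt
    = \bigl[-e^{-t}t^{i\alpha}h_{t}\bigr]_{0}^{\infty}+\int_{0}^{\infty}(-e^{-t}t^{i\alpha}+i\alpha e^{-t}t^{i\alpha-1})h_{t}\,dt
    \]
shows that $(I-\Delta_{1})F(x,y)=i\alpha\int_{0}^{\infty}e^{-t}t^{i\alpha-1}h_{t}(x,y)\,dt$, which yields $G_{i\alpha}(x,y)/C_{\alpha}$ after multiplying by $C_{\alpha}$.

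For smoothness off the diagonal, the strategy is to differentiate the kernel formula under the integral sign. Replacing $\Delta_{1}$ and $\Delta_{2}$ by $\partial_{t}$ via the heat equation, one formally gets
\[
    \Delta_{1}^{j}\Delta_{2}^{k}G_{i\alpha}(x,y)
    = i\alpha C_{\alpha}\int_{0}^{\infty}e^{-t}t^{i\alpha-1}\partial_{t}^{j+k}h_{t}(x,y)\,dt,
    \]
and by the heat kernel estimate $|t^{l}\partial_{t}^{l}h_{t}|\lesssim t^{-d/(d+1)}\exp(-c(R(x,y)^{d+1}/t)^{\gamma})$, the integrand is bounded uniformly on compact sets away from the diagonal by an integrable function of $t$ (the singularity at $t=0$ is killed by the exponential, the tail at $\infty$ by $e^{-t}$ together with the polynomial). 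This gives absolute and uniform convergence on compact sets off the diagonal, and hence continuity of every mixed Laplacian. The interchange of Laplacians with the integral is justified by Lemma~\ref{lem: diff inside} applied iteratively (once the Laplacians are converted to $\partial_{t}$ on $h_{t}$, so that $m(t)=e^{-t}t^{i\alpha-1-j-k}\cdot(\text{bounded})$ is in $L^{\infty}$ locally in $t$ combined with the integrability control above).

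The main obstacle is bookkeeping: one must carefully verify all the interchanges (Fubini, differentiation under the integral, integration by parts). The critical ingredient is that $R(x,y)>0$ gives exponential decay of $h_{t}$ and of its $t$-derivatives as $t\to0$, which makes every boundary term vanish and every tail integrable, but one must keep track of the estimates carefully for each number $j+k$ of derivatives. All these steps are handled uniformly by the heat kernel bounds~\eqref{eq:heat_estimates} and the gamma-function calculus used in the proof of Proposition~\ref{pro:kernel_gen_powers}, so no essentially new technique is required beyond those already established.
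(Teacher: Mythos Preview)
Your proposal is correct and follows essentially the same approach as the paper: integration by parts in $t$ (using $\Delta_{1}h_{t}=\partial_{t}h_{t}$ and vanishing boundary terms) to obtain the kernel representation, followed by differentiation under the integral sign via Lemma~\ref{lem: diff inside} and the heat kernel bounds~\eqref{eq:heat_estimates} together with Proposition~\ref{pro:kernel_gen_powers} for smoothness off the diagonal. The paper is terser about the Fubini and boundary-term justifications, but the substance is identical.
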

\begin{proof}
 As $u\in\D$ is a finite sum, using integration by parts we have that
\begin{eqnarray*}
  (I-\Delta)^{i\alpha}u&=&C_\alpha\int_X\left( \int_0^\infty
    h_t(x,y)e^{-t}t^{i\alpha}dt+\int_0^\infty \Bigl(\frac{\partial}{\partial
      t}h_t(x,y)\Bigr)e^{-t}t^{i\alpha}dt\right)u(y)\,d\mu(y)\\
&=&i\alpha C_\alpha\int_X\int_0^\infty h_t(x,y)e^{-t}t^{i\alpha-1}\,dt\,u(y)\,d\mu(y).
\end{eqnarray*}
Thus \eqref{eq:kern_Besselop} holds. From Lemma~\ref{lem: diff inside} we may apply powers of the Laplacian inside the integral in~\eqref{eq:ker_imag_Bessel} to establish smoothness off the diagonal.  Moreover the application of $\Delta_{1}^j\Delta^k_2$ is equivalent to replacing $h_t(x,y)$ with $\frac{\partial^{(j+k)}}{\partial t^{(j+k)}}h_{t}$, which satisfies the same estimates as $t^{-(j+k)}h_{t}$. Applying Proposition
\ref{pro:kernel_gen_powers} with $s=-(j+k)(d+1)$, we see that
\[
\vert \Delta_{1}^j\Delta^k_2 G_{i\alpha}(x,y)\vert \lesssim R(x,y)^{-(j+k)(d+1)-d}e^{-R(x,y)^{\frac{\gamma(d+1)}{\gamma+1}}},
\]
for all $j,k\ge 0$, so in particular $G_{i\alpha}(x,y)$ is smooth off the diagonal.
\end{proof}

\begin{cor}
 For $\alpha\in\mathbb{R}\setminus \{0\}$, the operator $(I-\Delta)^{i\alpha}$ is a Calder\'on-Zygmund operator.
\end{cor}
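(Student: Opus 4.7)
The plan is to assemble the ingredients already built up in the section into the three requirements of Definition~\ref{def:SIO_CZO} for $(I-\Delta)^{i\alpha}$: $L^{2}$-boundedness, integral representation against a kernel smooth off the diagonal, and the Dini regularity condition \eqref{eq:singint2}. The argument will mirror the one used for the Riesz potentials in the corollary following Proposition~\ref{pro:Kernel_imaginary}, so most of the work has already been done.

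First I would verify $L^{2}$-boundedness. For an eigenfunction $\varphi_{n}$ one has $(I-\Delta)^{i\alpha}\varphi_{n}=(1+\lambda_{n})^{i\alpha}\varphi_{n}$, and since $\lambda_{n}\ge 0$ the multiplier $(1+\lambda_{n})^{i\alpha}$ has modulus one, so the spectral theorem identifies $(I-\Delta)^{i\alpha}$ as a unitary operator on $L^{2}(\mu)$ and in particular bounded. Hence the operator extends from $\D$ to all of $L^{2}(\mu)$.

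Next I would extract the kernel estimates needed to apply Theorem~\ref{thm:Singular_integral}. Theorem~\ref{thm:Kernel_Bessel_Imaginary} already shows that $(I-\Delta)^{i\alpha}$ is given by integration against $G_{i\alpha}(x,y)$ for $u\in\D$ with $x\notin\supp u$, that $G_{i\alpha}$ is smooth off the diagonal, and more importantly that
\[
    \bigl|\Delta_{1}^{j}\Delta_{2}^{k}G_{i\alpha}(x,y)\bigr|
    \lesssim R(x,y)^{-(j+k)(d+1)-d}e^{-R(x,y)^{\frac{\gamma(d+1)}{\gamma+1}}}
\]
for all $j,k\ge0$. Since the exponential factor is bounded by $1$, specializing to $(j,k)=(0,0)$ gives $|G_{i\alpha}(x,y)|\lesssim R(x,y)^{-d}$ and to $(j,k)=(0,1)$ gives $|\Delta_{2}G_{i\alpha}(x,y)|\lesssim R(x,y)^{-2d-1}$, which are exactly the hypotheses of Theorem~\ref{thm:Singular_integral}.

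Finally I would invoke Theorem~\ref{thm:Singular_integral} to conclude that $G_{i\alpha}$ satisfies
\[
    \bigl|G_{i\alpha}(x,y)-G_{i\alpha}(x,\overline{y})\bigr|
    \lesssim \Bigl(\tfrac{R(y,\overline{y})}{R(x,\overline{y})}\Bigr)^{1/2}R(x,y)^{-d}
\]
whenever $R(x,y)\ge cR(y,\overline{y})$. Since $\eta(s)=s^{1/2}$ is a Dini modulus of continuity, $G_{i\alpha}$ is a standard kernel in the sense of Definition~\ref{def:SIO_CZO}, and combined with the $L^{2}$-boundedness from the first step this shows $(I-\Delta)^{i\alpha}$ is a Calder\'on-Zygmund operator. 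There is no real obstacle in this proof; it is essentially a bookkeeping step that gathers the spectral observation together with the pointwise bounds from Theorem~\ref{thm:Kernel_Bessel_Imaginary} and feeds them into Theorem~\ref{thm:Singular_integral}.
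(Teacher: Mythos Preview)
Your proposal is correct and follows essentially the same approach as the paper: use the spectral theorem for $L^{2}$-boundedness, read off the estimates $|G_{i\alpha}(x,y)|\lesssim R(x,y)^{-d}$ and $|\Delta_{2}G_{i\alpha}(x,y)|\lesssim R(x,y)^{-2d-1}$ from the proof of Theorem~\ref{thm:Kernel_Bessel_Imaginary}, and then invoke Theorem~\ref{thm:Singular_integral}. The paper's own proof is just a terser version of exactly this argument.
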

\begin{proof}
  The operator $(I-\Delta)^{i\alpha}$
extends to a bounded operator on $L^{2}(\mu)$ by the spectral theorem. The
 proof of Theorem \ref{thm:Kernel_Bessel_Imaginary} implies that
$G_{i\alpha}$ satisfies the estimates \eqref{eq:intro_est1} and
\eqref{eq:intro_est2}. Thus $(I-\Delta)^{i\alpha}$ is a Calder\'on-Zygmund operator.
\end{proof}
\begin{cor}
If $\alpha\in\mathbb{R}\setminus \{0\}$, then the operator $(I-\Delta)^{i\alpha}$
defined originally on $\D$ extends to a bounded operator on $L^{p}(\mu)$
for $1<p<\infty$ and satisfies weak $1$-$1$ estimates.\end{cor}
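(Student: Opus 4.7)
The plan is to follow exactly the pattern used for the Riesz potentials in Theorem~\ref{thm:Lp_bounded_imag}, since once we know that $(I-\Delta)^{i\alpha}$ is a Calder\'on-Zygmund operator (established in the corollary immediately preceding the statement), the standard $L^p$ theory takes over. The bulk of the work was to verify the kernel estimates \eqref{eq:intro_est1} and \eqref{eq:intro_est2}, and the $L^2$ boundedness of $(I-\Delta)^{i\alpha}$ is given by the spectral theorem because $t\mapsto (1+t)^{i\alpha}$ is bounded on the spectrum of $-\Delta$.

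First I would invoke Theorem~3 of \cite[page~19]{Ste_PMS43_93}, which says that any Calder\'on-Zygmund operator bounded on $L^{2}(\mu)$ automatically extends to a bounded operator on $L^{p}(\mu)$ for $1<p\le 2$ and satisfies weak $1$-$1$ estimates. This directly yields the conclusion in the range $1<p\le 2$, together with the weak type endpoint estimate. The setup of \cite{Ste_PMS43_93} requires only a space of homogeneous type, which is satisfied by $(X,R,\mu)$ in view of the volume-doubling property $\mu(B(y,r))\sim r^d$ that underlies Lemma~\ref{lem:int_exp_X}.

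To extend the range to $2<p<\infty$ I would use the duality argument of \cite[p.~29, proof of Theorem~1]{Ste_PMS30_70}. The adjoint of $(I-\Delta)^{i\alpha}$ on $L^2(\mu)$ is $(I-\Delta)^{-i\alpha}$, which is simply the operator obtained by replacing $\alpha$ with $-\alpha$. Since $-\alpha\in\mathbb{R}$ as well, the preceding corollaries apply verbatim to $(I-\Delta)^{-i\alpha}$: it is a Calder\'on-Zygmund operator with kernel $G_{-i\alpha}$ satisfying the same type of estimates, and hence is bounded on $L^{q}(\mu)$ for $1<q\le 2$ by the previous step. Taking $q$ to be the conjugate exponent of $p\in(2,\infty)$ and dualizing gives the desired boundedness of $(I-\Delta)^{i\alpha}$ on $L^p(\mu)$.

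There is no serious obstacle here; the only thing worth pausing on is confirming that the duality step is legitimate, i.e.\ that the adjoint of $(I-\Delta)^{i\alpha}$ is indeed $(I-\Delta)^{-i\alpha}$. This is immediate from the spectral calculus: on the orthonormal basis $\{\varphi_n\}$ the operator acts by multiplication by the unimodular scalars $(1+\lambda_n)^{i\alpha}$, whose complex conjugates are $(1+\lambda_n)^{-i\alpha}$. Combining the two ranges yields boundedness for all $1<p<\infty$ and the weak $1$-$1$ bound from the first step completes the proof.
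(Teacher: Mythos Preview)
Your proposal is correct and mirrors precisely the argument the paper gives for the Riesz case in Theorem~\ref{thm:Lp_bounded_imag}; the paper itself omits a proof of this corollary, evidently because it follows verbatim from that earlier argument once the Calder\'on--Zygmund property is in hand. Your added remarks on the homogeneous-type hypothesis and on identifying the adjoint via the spectral calculus are accurate and appropriate.
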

\begin{rem}
The boundedness of $(I-\Delta)^{i\alpha}$ on $L^{p}(\mu)$ for $1<p<\infty$
can also be obtained using the general spectral multiplier theorem
of \cite{DuOuSi_JFA02} (see \cite[Proposition 3.2]{Str_JFA03}).
\end{rem}
We also note that one can easily modify the  proof
of Proposition~\ref{pro:Riesz_differentiable} to obtain the following
result.
\begin{prop}
Let $x,y\in X$ with $x\ne y$. Then the map $\alpha\mapsto G_{i\alpha}(x,y)$
is differentiable.
\end{prop}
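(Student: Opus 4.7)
The plan is to mimic the argument used for Proposition~\ref{pro:Riesz_differentiable}, modifying it only to account for the extra factors present in $G_{i\alpha}$.  Writing
\[
G_{i\alpha}(x,y) = \frac{i\alpha}{\Gamma(1-i\alpha)}\int_0^\infty h_t(x,y)\,e^{-t}\,t^{i\alpha-1}\,dt,
\]
one first notes that the prefactor $\alpha\mapsto i\alpha/\Gamma(1-i\alpha)$ is real-analytic in $\alpha\in\mathbb{R}$, since $\Gamma$ is entire and non-vanishing on the line $\re z=1$.  It therefore suffices to prove that the integral $I(\alpha):=\int_0^\infty h_t(x,y)e^{-t}t^{i\alpha-1}\,dt$ is differentiable in $\alpha$ for each fixed $x\ne y$.

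To do this, I would apply the standard criterion for differentiation under the integral sign (e.g.\ \cite[Theorem 2.27]{Fol_99}).  For each fixed $t>0$ the integrand is smooth in $\alpha$ with
\[
\frac{\partial}{\partial\alpha}\Bigl(h_t(x,y)e^{-t}t^{i\alpha-1}\Bigr)
= i\,h_t(x,y)\,e^{-t}\,t^{i\alpha-1}\ln t,
\]
whose modulus is $h_t(x,y)e^{-t}t^{-1}|\ln t|$, independent of $\alpha$.  Thus the whole argument reduces to showing that
\[
\int_0^\infty h_t(x,y)\,e^{-t}\,t^{-1}\,|\ln t|\,dt < \infty,
\]
after which dominated convergence gives $I'(\alpha)=i\int_0^\infty h_t(x,y)e^{-t}t^{i\alpha-1}\ln t\,dt$.

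The integrability check is the only analytic work.  Using the upper bound from~\eqref{eq:heat_estimates} it suffices that
\[
\int_0^\infty t^{-\frac{d}{d+1}-1}|\ln t|\,e^{-t}\,\exp\!\left(-c\Bigl(\tfrac{R(x,y)^{d+1}}{t}\Bigr)^{\gamma}\right)dt < \infty.
\]
Split the integral at $t=1$.  On $[1,\infty)$ one bounds $|\ln t|\le \delta^{-1}t^{\delta}$ for some small $\delta\in(0,\frac{d}{d+1})$ (exactly as in Proposition~\ref{pro:Riesz_differentiable}), after which the factor $e^{-t}$ provides more than enough decay.  On $(0,1]$ the factor $e^{-t}$ is harmless and the $\exp(-c(R(x,y)^{d+1}/t)^{\gamma})$ factor---which uses the hypothesis $x\ne y$---kills the singularity coming from $t^{-d/(d+1)-1}|\ln t|$; a direct substitution $u=R(x,y)^{d+1}/t$, or simply the observation that this term is $O(t^N)$ for every $N$ as $t\to 0^+$, makes the integral converge.

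The only place any subtlety arises is in handling $|\ln t|$ near $t=1$, but $\ln t$ is continuous there, so the only delicate endpoints are $0$ and $\infty$, both treated above.  Combining the differentiability of the prefactor with differentiability of $I(\alpha)$ by the product rule yields differentiability of $\alpha\mapsto G_{i\alpha}(x,y)$.
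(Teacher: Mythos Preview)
Your proposal is correct and is exactly the modification of the proof of Proposition~\ref{pro:Riesz_differentiable} that the paper has in mind; the paper does not even spell out the details here but simply points to that earlier argument. Your treatment is in fact slightly more explicit than the paper's, since you separate out the analytic prefactor $i\alpha/\Gamma(1-i\alpha)$ and you note that the extra $e^{-t}$ makes the tail at $t\to\infty$ strictly easier than in the Riesz case.
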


\section{\label{sec:Products}Complex powers on products of fractals and blowups}

In this section we extend our analysis of Calder\'on-Zygmund operators
and the Riesz and Bessel potentials to finite products $X^N$, where $X$ is
either a nested fractal $K$ or an infinite blow-up without boundary of $K$. The
study of the energy, the Laplace operator, and the heat kernel estimates
on products of PCF fractals was initiated by Strichartz in~\cite{Str:TAM04}
(see, also, \cite{Str_Prin06,Str_08Quant}). We begin by reviewing
the basic steps in his construction. Consider the product space
$X^{N}$ with the product measure $\mu^{N}$.
Notice that $X^{N}$ is not, in general, a PCF fractal.
Recall from \cite{Str:TAM04} that a measurable function $u$ on $X^2$ has \emph{minimal regularity} if and
only if for almost $x_{2}\in X$, $u(\cdot,x_{2})\in\dom\E$,
and for almost every $x_{1}\in X$, $u(x_{1},\cdot)\in \dom\E$. Such a
function belongs to the domain of the energy on $X^{2}$, $\dom\E^{2}$,
if and only if
\[
\E^{2}(u)=\int_{X}\E(u(\cdot,x_{2}))d\mu(x_{2})+\int_{X}\E(u(x_{1},\cdot))d\mu(x_{1})
\]
exists and is finite \cite{Str:TAM04,Str_Prin06}. This definition can be easily
generalized to $X^N$. Then we may define
a Laplacian by the weak formulation
\[
\E^{N}(u,v)=-\int_{X^{N}}(\Delta u)v\,d\mu^{N}.
\]
To avoid confusion, we will henceforth write $\Delta^{\prime}$
for the Laplacian on $X$. Recall that we fixed orthonormal basis
$\{\varphi_{n}\}_{n}$ for $L^{2}(\mu)$ such that each $\varphi_{n}$
is an eigenfunction of $\Delta^{\prime}$. Then if
$\underline{n}=(n_1,n_2,\dots,n_N)$ the functions
\[
\varphi_{\underline{n}}(x)=\varphi_{n_1}(x_{1})\varphi_{n_2}(x_{2})\dots \varphi_{n_N}(x_{N}),
\]
where $x=(x_{1},x_{2},\dots,x_{N})\in X^{N}$, form an orthonormal basis for $L^{2}(\mu^{N})$.
Let $\D^{N}$ be the set of finite linear combinations of $\varphi_{\underline{n}}$.

The heat kernel on $X^{N}$ is the product \begin{equation}
h_{t}^{N}(x,y)=h_{t}(x_{1},y_{1})h_{t}(x_{2},y_{2})\dots h_{t}(x_N,y_N),\label{eq:heat_kernel_prod}\end{equation}
for $x,y\in X^{N}$ (\cite[Theorem 6.1]{Str_08Quant}).
We extend the metric to $X^{N}$ by\[
R^{N}(x,y)=\left(R(x_{1},y_{1})^{(d+1)\gamma}+R(x_{2},y_{2})^{(d+1)\gamma}+\dots
+R(x_{N},y_{N})^{(d+1)\gamma}\right)^{1/((d+1)\gamma)}.\]
Then, if $K$ is an affine nested fractal, the heat kernel estimates
become \cite[Theorem 2.2]{Str_08Quant}
\begin{equation}
h_{t}^{N}(x,y)\lesssim
t^{-\frac{Nd}{d+1}}\exp\left(-c\frac{R^{N}(x,y)^{d+1}}{t}\right)^{\gamma}.\label{eq:heat_estimates_prod}
\end{equation}
Theorems \ref{th:est1} and \ref{th:est1} can be easily extended to the
product setting to show that the same estimates are satisfied by $t^{k}\frac{\partial^{k}}{\partial t^{k}}h_{t}^{N}(x,y)$
for all $t>0$.

\subsection{Singular integral and Calder\'on-Zygmund operators on products}

 We say that on operator $T$ bounded on $L^{2}(\mu^{N})$
is a \emph{Calder\'on-Zygmund operator} on $X^{N}$ if it is given
by integration with respect to a kernel $K(x,y)$ which is a function
off the diagonal and satisfies
\begin{equation}
\vert K(x,y)\vert\lesssim R^{N}(x,y)^{-Nd}\label{eq:singint1_prod}\end{equation}
for all $x\ne y$ and
\begin{equation}
\vert K(x,y)-K(x,\overline{y})\vert\lesssim\eta\left(\frac{R^{N}(y,\overline{y})}{R^{N}(x,\overline{y})}\right)R^{N}(x,y)^{-Nd},\label{eq:singint2_prod}\end{equation}
if $R^{N}(x,y)\ge cR^{N}(y,\overline{y})$, for some $c>1$, where
$\eta$ is a Dini modulus of continuity. We say that $T$ is a \emph{singular integral operator} if $K(x,y)$
is singular at $x=y$. The next theorem, which is the main result
of this section, extends Theorem \ref{thm:Singular_integral} to the
product setting.
\begin{thm}
\label{thm:Singintegral_product}Let $K$ be a nested fractal and assume that $X$
is either $K$ or an infinite blow-up
of K without boundary. Suppose that $T:L^{2}(\mu^{N})\to L^{2}(\mu^{N})$ is given
by integration with respect to a kernel $K(x,y)$ which is smooth
off the diagonal and satisfies the following estimates
\begin{eqnarray}
\vert K(x,y)\vert & \lesssim & R^{N}(x,y)^{-Nd}\label{eq:singint_1}\\
\vert\Delta_{y,i}^{\prime}K(x,y)\vert & \lesssim & R^{N}(x,y)^{-(N+1)d-1},\; i=1,2,,\dots,N\label{eq:singint_2}\end{eqnarray}
where for $y=(y_{1},y_{2},\dots,y_{N})$, $\Delta_{y,i}^{\prime}K(x,y)$ is the
Laplacian on $X$ with respect to $y_{i}$. Then $T$ is a Calder\'on-Zygmund
operator. In particular, $T$ extends to a bounded operator
on $L^{p}(\mu^{N})$ for all $1<p<\infty$ and satisfies weak $1$-$1$
estimates.\end{thm}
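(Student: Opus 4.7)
The plan is to reduce the product estimate to $N$ applications of Lemma~\ref{lem:main_lemma_SingularIntegral}, one per coordinate, via a telescoping interpolation between $y$ and $\overline{y}$. For $i=0,1,\ldots,N$ set $z^{(i)}=(\overline{y}_{1},\ldots,\overline{y}_{i},y_{i+1},\ldots,y_{N})$, so $z^{(0)}=y$ and $z^{(N)}=\overline{y}$, and telescope
\[
K(x,y)-K(x,\overline{y})=\sum_{i=1}^{N}\bigl[K(x,z^{(i-1)})-K(x,z^{(i)})\bigr].
\]
The $i$-th summand differs only in the $i$-th coordinate, so I fix
$f_{i}(u):=K(x,\overline{y}_{1},\ldots,\overline{y}_{i-1},u,y_{i+1},\ldots,y_{N})$,
noting $f_{i}(y_{i})=K(x,z^{(i-1)})$ and $f_{i}(\overline{y}_{i})=K(x,z^{(i)})$, and estimate $|f_{i}(y_{i})-f_{i}(\overline{y}_{i})|$ by applying Lemma~\ref{lem:main_lemma_SingularIntegral} to $f_{i}$ on a cell $C_{i}\subset X$ in the $i$-th factor containing both $y_{i}$ and $\overline{y}_{i}$.

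Write $\rho:=R^{N}(x,\overline{y})\sim R^{N}(x,y)$. Since $\rho^{(d+1)\gamma}=\sum_{j}R(x_{j},\overline{y}_{j})^{(d+1)\gamma}$ there is at least one index $j_{0}$ with $R(x_{j_{0}},\overline{y}_{j_{0}})\gtrsim\rho$, and the same bound holds for $R(x_{j_{0}},y_{j_{0}})$ since $R(y_{k},\overline{y}_{k})\leq R^{N}(y,\overline{y})\leq\rho/c$ for all $k$. Following the cell-selection argument in the proof of Theorem~\ref{thm:Singular_integral}, I choose $C_{i}$ to be a cell of size $\rho'=r^{k_{0}+1}\rho$ (with $r=\max_j r_j$ and $k_{0}$ as there); taking $c>r^{-3-k_{0}}$ ensures both that $y_{i}$ and $\overline{y}_{i}$ lie in a common such cell and that for any $u\in C_{i}$ the point $z$ obtained by putting $u$ in the $i$-th slot of $z^{(i-1)}$ satisfies $R^{N}(x,z)\sim\rho$: if $j_{0}\neq i$ the large coordinate is frozen throughout the interpolation, while if $j_{0}=i$ then $R(x_{i},u)\geq R(x_{i},\overline{y}_{i})-\rho'\gtrsim\rho$. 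On $C_{i}$ the hypotheses \eqref{eq:singint_1} and \eqref{eq:singint_2} therefore become
\[
|f_{i}(u)|\lesssim\rho^{-Nd},\qquad |\Delta^{\prime}f_{i}(u)|\lesssim\rho^{-(N+1)d-1}=\rho^{-Nd-d-1},
\]
which exactly match the hypotheses of Lemma~\ref{lem:main_lemma_SingularIntegral} with $R\sim\rho$ and $\beta=Nd$.

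The Lemma then yields $|f_{i}(y_{i})-f_{i}(\overline{y}_{i})|\lesssim\bigl(R(y_{i},\overline{y}_{i})/\rho\bigr)^{1/2}\rho^{-Nd}$. Summing in $i$ and using $R(y_{i},\overline{y}_{i})\leq R^{N}(y,\overline{y})$ together with $\rho\sim R^{N}(x,y)$ gives
\[
|K(x,y)-K(x,\overline{y})|\lesssim N\Bigl(\frac{R^{N}(y,\overline{y})}{R^{N}(x,\overline{y})}\Bigr)^{1/2}R^{N}(x,y)^{-Nd},
\]
which is \eqref{eq:singint2_prod} with Dini modulus $\eta(t)=Nt^{1/2}$. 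The $L^{p}$ boundedness for $1<p<\infty$ and the weak $(1,1)$ estimates then follow from the standard Calder\'on-Zygmund theorem~\cite[Section I.6.5]{Ste_PMS43_93}, as in the proof of Theorem~\ref{thm:Lp_bounded_imag}. The main obstacle I anticipate is the cell-selection step: in the product setting one must simultaneously control $R^{N}(x,z)$ for points $z$ whose coordinates come from different places ($\overline{y}_{j}$ for $j<i$, $y_{j}$ for $j>i$, and $u\in C_{i}$ in the $i$-th slot), and the key technical fact that makes this go through is that $R^{N}$ is controlled up to an $N$-dependent constant by any single coordinate comparable to $\rho$, so the bulk of $R^{N}(x,\overline{y})$ survives freezing a single variable inside a cell of size $\rho'\ll\rho$.
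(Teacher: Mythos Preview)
Your proposal is correct and follows essentially the same approach as the paper's proof: both telescope $K(x,y)-K(x,\overline y)$ through intermediate points differing in one coordinate at a time and then apply Lemma~\ref{lem:main_lemma_SingularIntegral} in each factor on a cell of size $\sim r^{k_0+1}R^{N}(x,y)$, the key point being that at least one coordinate $j_{0}$ has $R(x_{j_0},\overline y_{j_0})\gtrsim R^{N}(x,y)$, which keeps $R^{N}(x,z)\sim\rho$ along the entire interpolation. The only cosmetic difference is that the paper (for $N=2$) orders the telescoping so that the ``large'' coordinate is changed last, whereas you telescope in a fixed order and observe that the large coordinate is always present (frozen if $j_{0}\neq i$, and still far from $x_{j_0}$ if $j_{0}=i$ because $C_{i}$ has size $\ll\rho$); these are equivalent bookkeeping choices.
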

\begin{proof}
We prove the theorem for $N=2$. The difference between this case and general $N$
is merely notation. Let $c^{\prime}$ be the constant from Theorem \ref{thm:Singular_integral}
and let $c=2c^{\prime}$. Let $x=(x_{1},x_{2}),y=(y_{1},y_{2})$,
and $\overline{y}=(\overline{y}_{1},\overline{y}_{2})$ in $X^{2}$
such that $R^{2}(x,y)\ge cR^{2}(y,\overline{y})$. Then we can find
$C=C_{1}\times C_{2}$, where $C_{1}$ and $C_{2}$ are cells of size
$r^{k_{0}+1}R^{2}(x,y)$, such that $y,\overline{y}\in C$ and $x\notin C$.
Moreover, we have that $R(x_{1},y_{1})\ge c^{\prime}R(y_{1},\overline{y}_{1})$
or $R(x_{2},y_{2})\ge c^{\prime}R(y_{2},\overline{y}_{2})$. Assume
that $R(x_{1},y_{1})\ge c^{\prime}R(y_{1},\overline{y}_{1})$. Then
\begin{align*}
    K(x,y)-K(x,\overline{y})
    &= K(x,(y_{1},y_{2}))-K(x,(y_{1},\overline{y}_{2}))\\
    &\quad + K(x,(y_{1},\overline{y}_{2}))-K(x,(\overline{y}_{1},\overline{y}_{2})).
    \end{align*}
Let $u_{(x,y_{1})}(z)=K(x,(y_{1},z))$ for $z\in C_{2}$ and let $v_{(x,\overline{y}_{2})}(z)=K(x,(z,\overline{y}_{2}))$
for $z\in C_{1}$. Since $R(x_{1},y_{1})\ge c^{\prime}R(y_{1},\overline{y}_{1})$
we have that $u_{(x,y_{1})}$ is smooth on $C_{2}$ and $v_{(x,\overline{y}_{2})}$
is smooth on $C_{1}$. Moreover, they satisfy the following estimates\begin{eqnarray*}
\vert u_{(x,y_{1})}(z)\vert & \lesssim & R^{2}(x,y)^{-2d},\\
\vert\Delta^{\prime}u_{(x,y_{1})}(z)\vert & \lesssim & R^{2}(x,y)^{-3d-1},\end{eqnarray*}
for all $z\in C_{1}$, and\begin{eqnarray*}
\vert v_{(x,\overline{y}_{2})}(z)\vert & \lesssim & R^{2}(x,y)^{-2d},\\
\vert\Delta^{\prime}v_{(x,\overline{y}_{2})}(z)\vert & \lesssim & R^{2}(x,y)^{-3d-1},\end{eqnarray*}
for all $z\in C_{2}$. Lemma \ref{lem:main_lemma_SingularIntegral}
together with the fact that $R(y_{i},\overline{y}_{i})\leq R^{2}(y,\overline{y})$,
$i=1,2$, implies that\[
\vert u_{(x,y_{1})}(y_{2})-u_{(x,y_{1})}(\overline{y}_{2})\vert\lesssim\left(\frac{R^{2}(y,\overline{y})}{R^{2}(x,y)}\right)R(x,y)^{-2d}\]
and \[
\vert v_{(x,\overline{y}_{2})}(y_{1})-v_{(x,\overline{y}_{2})}(\overline{y}_{1})\vert\lesssim\left(\frac{R^{2}(y,\overline{y})}{R^{2}(x,y)}\right)R(x,y)^{-2d}.\]
Thus \eqref{eq:singint2_prod} holds with $\eta(s)=s$.
\end{proof}

\subsection{Purely imaginary Riesz potentials on products}

Having all the ingredients in place, we can define for $\alpha\in\mathbb{R}\setminus\{0\}$
and $u\in\D^{N}$
\[
(-\Delta)^{i\alpha}u=C(-\Delta)\int_{0}^{\infty}e^{t\Delta}ut^{-i\alpha}dt.\]
The kernel of $(-\Delta)^{i\alpha}$ is given by the formula
\[
K_{i\alpha}(x,y)=C\int_{0}^{\infty}(-\Delta_{1})h_{t}^{N}(x,y)t^{-i\alpha}dt.\]
Using, basically, the same computations as in Section \ref{sec:Imaginary-Powers}
we see that, for $\alpha\in\mathbb{R}\setminus\{0\}$, $K_{i\alpha}(x,y)$ is smooth
away from the diagonal and satisfies the following estimates:
\begin{eqnarray}
\vert K_{i\alpha}(x,y)\vert & \lesssim & R^{N}(x,y)^{-Nd},\label{eq:eq1_prod}\\
\vert\Delta_{y,i}^{\prime}K_{i\alpha}(x,y)\vert & \lesssim &  R^{N}(x,y)^{-(N+1)d-1},\; i=1,2,\dots,N,\label{eq:lapl_ker2_i_est}\end{eqnarray}
so $(-\Delta)^{i\alpha}$ is a Calder\'on-Zygmund operator on $X^{N}$.
We believe that they are singular integral operators but have not succeeded
in proving this.
\begin{cor}
For $a\in\mathbb{R}\setminus\{0\}$, the operator
 \[
(-\Delta)^{i\alpha}u(x)=\int_{X^{N}}K_{i\alpha}(x,y)u(y)\,d\mu^{N}(y)\]
extends to a bounded operator on $L^{p}(\mu^{N})$, for all $1<p<\infty$,
and satisfies weak $1$-$1$-estimates.
\end{cor}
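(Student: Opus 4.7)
The plan is to mirror the single-factor argument from the proof of Theorem~\ref{thm:Lp_bounded_imag}, with Theorem~\ref{thm:Singintegral_product} playing the role of Theorem~\ref{thm:Singular_integral}.

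First, I would establish boundedness on $L^{2}(\mu^{N})$ via the spectral theorem. Each product eigenfunction $\varphi_{\underline{n}}$ is an eigenfunction of $-\Delta$ on $X^{N}$ with eigenvalue $\lambda_{n_{1}}+\dots+\lambda_{n_{N}}$. Thus on the span of the $\varphi_{\underline{n}}$ with nonzero eigenvalue the operator $(-\Delta)^{i\alpha}$ acts by multiplication by the unimodular factor $(\lambda_{n_{1}}+\dots+\lambda_{n_{N}})^{i\alpha}$, so it extends by continuity to an isometry on the orthogonal complement of the kernel of $-\Delta$, and hence to a bounded operator on all of $L^{2}(\mu^{N})$ (defining it to be the identity, say, on the kernel of $-\Delta$).

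Second, the integral representation~$(-\Delta)^{i\alpha}u(x)=\int_{X^{N}}K_{i\alpha}(x,y)u(y)\,d\mu^{N}(y)$ for $u\in \D^{N}$ with $x\notin\supp u$ follows as in Proposition~\ref{pro:Kernel_imaginary}, since $h_{t}^{N}$ is the heat kernel on $X^{N}$ and $u$ is a finite linear combination of product eigenfunctions. The bounds~\eqref{eq:eq1_prod} and \eqref{eq:lapl_ker2_i_est} on $K_{i\alpha}$ have just been verified from the product heat kernel estimate~\eqref{eq:heat_estimates_prod} by the same computation as in Proposition~\ref{pro:Kernel_imaginary}. These are precisely the hypotheses~\eqref{eq:singint_1} and~\eqref{eq:singint_2} of Theorem~\ref{thm:Singintegral_product}, so that theorem applies and identifies $(-\Delta)^{i\alpha}$ as a Calder\'on-Zygmund operator on $X^{N}$.

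Third, I invoke the general Calder\'on-Zygmund theory on spaces of homogeneous type (\cite[I.6.5, Theorem 3]{Ste_PMS43_93}) to conclude that $(-\Delta)^{i\alpha}$ satisfies a weak type $1$-$1$ estimate and extends to a bounded operator on $L^{p}(\mu^{N})$ for all $1<p\leq 2$. Finally, boundedness on $L^{p}(\mu^{N})$ for $2<p<\infty$ follows by a standard duality argument (as in \cite[page 29]{Ste_PMS30_70}), using that the adjoint of $(-\Delta)^{i\alpha}$ is $(-\Delta)^{-i\alpha}$, to which the same Calder\'on-Zygmund analysis applies.

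The only non-routine point is verifying that the product-space bounds on $K_{i\alpha}$ and $\Delta_{y,i}' K_{i\alpha}$ actually follow from~\eqref{eq:heat_estimates_prod} in the form required by Theorem~\ref{thm:Singintegral_product}; this is a direct adaptation of the scaling computation in Proposition~\ref{pro:Kernel_imaginary}, replacing $d$ with $Nd$ in the volume growth and using that differentiating $h_{t}^{N}$ in $t$ the required number of times produces factors of $t^{-j-k}$ without altering the gaussian form, but this replication of the earlier argument is essentially mechanical and so is the main routine step rather than a genuine obstacle.
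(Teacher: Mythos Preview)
Your proposal is correct and follows essentially the same route as the paper: the paper states the corollary without separate proof because the estimates~\eqref{eq:eq1_prod}--\eqref{eq:lapl_ker2_i_est} together with $L^{2}$-boundedness via the spectral theorem feed directly into Theorem~\ref{thm:Singintegral_product}, whose statement already includes the $L^{p}$-boundedness and weak $1$-$1$ conclusions. Your explicit appeal to \cite[I.6.5, Theorem~3]{Ste_PMS43_93} and the duality argument merely unpacks what Theorem~\ref{thm:Singintegral_product} already asserts, so the two arguments coincide.
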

\begin{rem}
The boundedness of $(-\Delta)^{i\alpha}$ on $L^{p}(\mu^{N})$, $1<p<\infty$,
can also be deduced using the multivariable spectral results of \cite{sikora-2008}.
\end{rem}
The results about the dependence of the kernels on $\alpha$ extend
easily to the product setting. Using essentially the proof of
Proposition~\ref{pro:Riesz_differentiable} we see that
$\alpha\mapsto K_{i\alpha}(x,y)$ is differentiable in $\alpha$ for
all $x,y\in X$ with $x\ne y$.

\subsection{Bessel Potentials on products}

Consider now the strictly positive operator $A=I-\Delta$. As before,
for $\re\alpha<0$ and $u\in\D^{N}$, we define
\begin{equation}
A^{\alpha}u=\frac{1}{\Gamma(-\alpha)}\int_{0}^{\infty}e^{-t}t^{-\alpha-1}e^{t\Delta}udt.\label{eq:complex_pow_prod}
\end{equation}
with the kernel
 \[
K_{\alpha}(x,y)=\int_{0}^{\infty}h_{t}^{N}(x,y)e^{-t}t^{-\alpha-1}dt.\]
It is clear that the equivalent of~Proposition \ref{pro:group} holds so
we can define $A^{\alpha}=A^{\alpha-k}A^{k}$, where $k$ is such
that $-1\le\re\alpha-1<0$, if $\re\alpha\ge0$. Versions of all the statements
established in Section~\ref{sec:Bessel-Potentials} remain valid for
this class of operators. The crucial ingredients in the proofs there were the heat kernel estimates~\eqref{eq:heat_estimates}
which we have now as~\eqref{eq:heat_estimates_prod}.

In particular, we see from~\eqref{eq:heat_estimates_prod} that all of
the integrals we encounter in the product setting differ
from~\eqref{eq:kernel_gen_powers} only in that there is an extra
factor of $t^{-\frac{(N-1)d}{d+1}}$, so
Proposition~\ref{pro:kernel_gen_powers} is valid if we replace
$h_{t}(x,y)$ with $h^{N}_{t}(x,y)$ and replace each occurrence of
$s-d$ in~\eqref{eq:Lsm bounds} with $s-Nd$, making the regions for the
estimates $s<Nd$, $s=Nd$ and $s>Nd$. 

The above quickly gives an analogue of
Theorem~\ref{thm:Lp_negative}. Notice that the singularity occurring
for $s\leq Nd$ is in $L^{1}(d\mu^{N})$ if $s>0$, so that $\Vert
K_{\alpha}(\cdot,y)\Vert_{1}$ is bounded by a constant independent of
$y$ and similarly for $\Vert K_{\alpha}(x,\cdot)\Vert_{1}$.  This
implies that for $\re\alpha<0$, $A^{\alpha}$ extends to be a bounded
operator on $L^{p}(\mu^{N})$ for all $1\le p\le\infty$. 

We also obtain product versions of
Proposition~\ref{pro:Hilbert-Schmidt} and Corollary~\ref{cor: Hilbert
  Schmidt}.  The singularity $R(x,y)^{s-Nd}$ is in $L^{2}(d\mu^{N})$
if $2(s-Nd)+Nd>0$, so if $s>\frac{Nd}{2}$.  As $s=-\re(\alpha)(d+1)$
we conclude that $\|K_{\alpha}(x,\cdot)\|_{L^{2}}$ is uniformly
bounded for $\re(\alpha)<-\frac{Nd}{2(d+1)}$.  Thus on the compact
fractal $K$ the kernel is in $L^{2}$, the operator $A^{\alpha}$ is
Hilbert-Schmidt, and we have the $L^{2}$ expansion 
\[
K_{\alpha}(x,y)=\sum_{\underline{n}}(1+\lambda_{n_1}+\lambda_{n_2}+\dots +\lambda_{n_N})^{\alpha}\varphi_{\underline{n}}(x)\varphi_{\underline{n}}(y).\]

Our estimates show that the kernel is always smooth away from the
diagonal.  If $\re(\alpha)<-\frac{Nd}{d+1}$ then we have $s>Nd$, from
which the kernel is also globally continuous and uniformly bounded.
By the same argument as in Proposition~\ref{prop Bessel potential
  dependence on alpha}  the map $\alpha\mapsto K_{\alpha}(x,y)$ is
analytic on $\{\re\alpha<0\}$ for all  $x\neq y\in X^{N}$. 

For purely  imaginary Bessel potentials $(I-\Delta)^{i\alpha}$ we have
analogues of Theorem~\ref{thm:Kernel_Bessel_Imaginary} and its
corollaries.  Specifically, for $\alpha\in\mathbb{R}\setminus\{0\}$ we define
\[
G_{i\alpha}(x,y)=i\alpha C_{\alpha}\int_{0}^{\infty}h_{t}^{N}(x,y)\bigr)e^{-t}t^{i\alpha-1}dt.\]
and verify that it represents $(I-\Delta)^{i\alpha}$ on those $u\in
D^{N}$ with support away from $x$.  By the previous reasoning about
the analogue of Proposition~\ref{pro:kernel_gen_powers} (with $d$
replaced by $Nd$ in the conclusions) we see that $G_{i\alpha}(x,y)$ is
smooth off the diagonal and satisfies~\eqref{eq:eq1_prod}
and~\eqref{eq:lapl_ker2_i_est}. Thus 
$(I-\Delta)^{i\alpha}$ is a Calder\'on-Zygmund operator and it extends to a bounded operator on $L^{p}(\mu^N)$
for all $1<p<\infty$ and satisfies weak $1$-$1$ estimates.  The map $\alpha\mapsto G_{i\alpha}(x,y)$
is also differentiable for all $x\ne y$. 

\bibliographystyle{amsplain}
\bibliography{bibliography}

\end{document}